\definecolor{pgray}{gray}{0.8}
\newtheorem{theorem}{Theorem}[section]
\newtheorem{cor}[theorem]{Corollary}
\newtheorem{proposition}[theorem]{Proposition}
\newtheorem{lemma}[theorem]{Lemma}
\numberwithin{equation}{section}
\title{\mbox{}}
\begin{document}
%\begin{flushright}
%	2.26.2018
%\end{flushright}
\begin{center}
{\bf \Large{
	Optimal Estimates for Far Field Asymptotics of Solutions to the Quasi-Geostrophic Equation
}}\\
\vspace{5mm}
{\sc\large
	Masakazu Yamamoto}\footnote{Graduate School of Science and Technology,
%Graduate School of Science,
	Niigata University,
	Niigata 950-2181, Japan}\qquad
{\sc\large
	Yuusuke Sugiyama}\footnote{Department of Engineering,
	The University of Shiga Prefecture,
	Hikone 522-8533, Japan}
\end{center}
\maketitle
\vspace{-15mm}
\begin{abstract}
The initial value problem for the two dimensional dissipative quasi-geostrophic equation of the critical and the supercritical cases is considered.
Anomalous diffusion on this equation provides slow decay of solutions as the spatial parameter tends to infinity.
In this paper, uniform estimates for far field asymptotics of solutions are given.
\end{abstract}
%%% ----------------------------------------------------------------------
%
%% main text

\section{Introduction}
We derive far field asymptotics of solutions of the following initial-value problem:
\begin{equation}\label{qg}
\left\{
\begin{array}{lr}
	\partial_t \theta + (-\Delta)^{\alpha/2} \theta + \nabla \cdot (\theta\nabla^\bot\psi) = 0,
	&
	t > 0,~ x \in \mathbb{R}^2,\\
	(-\Delta)^{1/2}\psi = \theta,
	&
	t > 0,~ x \in \mathbb{R}^2,\\
	\theta (0,x) = \theta_0 (x),
	&
	x \in \mathbb{R}^2,
\end{array}
\right.
\end{equation}
where $\partial_t = \partial/\partial t,~ \nabla^\bot = (-\partial_2, \partial_1),~ \partial_j = \partial/\partial x_j$ for $j = 1,2$, and $(-\Delta)^{\alpha/2} \varphi = \mathcal{F}^{-1} [|\xi|^\alpha \mathcal{F} [\varphi]]$ for $0 < \alpha \le 2$.
The unknown function $\theta = \theta (t,x)$ stands for the potential temperature and $\psi = \psi (t,x)$ is the stream function (cf.\cite{Cnstntn-Mjd-Tbk}).
The fluid velocity is represented by $\nabla^\bot \psi = (-R_2 \theta, R_1 \theta)$ and $R_j = \partial_j (-\Delta)^{-1/2}$ is the Riesz transform.
When $\alpha = 1$ and $0 < \alpha < 1$, the scaling property of \eqref{qg} is in the critical and the supercritical, respectively.
In those cases, it is well-known that some smallness and smoothness for the initial-data are required to obtain the global existence of solutions in time.
Global existence of solutions in scale-invariant spaces is important problem also in the study for the Navier-Stokes flow.
Especially the critical quasi-geostrophic equation has similar stracture as the Navier-Stokes equations.
Furthermore, in the critical and the supercritical cases, the quasi-geostrophic equation seems to be elliptic and hyperbolic, respectively.
Hence the several methods for parabolic equations are not working for \eqref{qg}.
Because of those reasons, the quasi-geostrophic equation is considered by many authors (see \cite{Cffrll-Vssur,Ch-L,C-C-W,Crdb-Crdb,J05,J04,K-N-V,Mur06}).
In this paper, we treat the global solution in time which satisfies that
\begin{equation}\label{exist}
	\theta \in C \bigl( [0,\infty), L^1 (\mathbb{R}^2) \cap L^\infty (\mathbb{R}^2) \bigr).
%	\quad
%	\theta (t,x) \ge 0,
\end{equation}
Then the mass-conservation and the uniform decay in time hold:
\begin{equation}\label{mass}
	\int_{\mathbb{R}^2} \theta (t,x) dx
	=
	\int_{\mathbb{R}^2} \theta_0 (x) dx,
	\quad
	\left\| \theta (t) \right\|_{L^\infty (\mathbb{R}^2)}
	\le
	C (1+t)^{-2/\alpha}.
\end{equation}
Those properties are confirmed initial data that is small and smooth.
In the recent paper \cite{Y-S18aX}, the smoothness and the upper bound of spatial decay of the solution are proved.
Namely, upon the condition $\theta_0 \in H^\sigma (\mathbb{R}^2)$ for $\sigma > 2$ and $|x|^2 \theta_0 \in L^q (\mathbb{R}^2)$ for $q > 2/\alpha$, the solution satisfies that
\begin{equation}\label{drbdd}
	\bigl\| (-\Delta)^{\sigma/2} \theta (t) \bigr\|_{L^2 (\mathbb{R}^2)}
	\le
	C (1+t)^{-\frac1\alpha-\frac\sigma\alpha}
\end{equation}
and
\begin{equation}\label{wttheta}
	\bigl\| |x|^2 \theta (t) \bigr\|_{L^q (\mathbb{R}^2)}
	\le
	C (1+t)^{\frac{2}{\alpha q}}.
\end{equation}
Those estimates are optimal since the fundamental solution $G_\alpha (t,x) = (2\pi)^{-1} \mathcal{F}^{-1} [e^{-t |\xi|^\alpha}] (x)$ of the linear equation $\partial_t \theta + (-\Delta)^{\alpha/2} \theta = 0$ fulfills that $\| (-\Delta)^{\sigma/2} G_\alpha (t) \|_{L^2 (\mathbb{R}^2)} = t^{-\frac1\alpha-\frac\sigma\alpha} \| (-\Delta)^{\sigma/2} G_\alpha (1) \|_{L^2 (\mathbb{R}^2)}$ and $\| |x|^2 G_\alpha (t) \|_{L^q (\mathbb{R}^2)}$ $= t^{\frac{2}{\alpha q}} \| |x|^2 G_\alpha (1) \|_{L^q (\mathbb{R}^2)}$.
When $\alpha = 1$, this fundamental solution is given by the Poisson kernel $P$.
Moreover the lower bound of spatial decay is derived in \cite{Y-S18aX}:
\begin{equation}\label{L2}
	\bigl\| |x|^2 \bigl( \theta (t) - M G_\alpha (t) \bigr) \bigr\|_{L^2 (\mathbb{R}^2)}
	\le
	C L_\alpha (t) \bigl( \log (2+t) \bigr)^{1/2},
\end{equation}
where $M = \int_{\mathbb{R}^2} \theta_0 (x) dx$, and $L_\alpha (t) = \log (2+t)$ for $\alpha = 1$ and $L_\alpha (t) = 1$ for $0 < \alpha < 1$.
Since $\| |x|^2 G_\alpha (t) \|_{L^2 (\mathbb{R}^2)} = + \infty$ and $G_\alpha \in C ((0,\infty) \times \mathbb{R}^2)$, \eqref{L2} intends that $\theta$ and $M G_\alpha$ are canceled in far field.
Therefore the asymptotic profile of $\theta$ as $|x| \to +\infty$ is presented by $MG_\alpha$.
This idea is developed from pointwise-estimates for Navier-Stokes flow via Miyakawa \cite{Mykw00}, and Miyakawa and Schonbek \cite{Mykw-Schnbk}, and firstly applied by Brandolese \cite{Brndls}, and Brandolese and Vigneron \cite{Brndls-Vgnrn}.
A main goal of this paper is to show the uniform estimate of the spatial decay of the solution.
Specifically, we provide the similar estimate as \eqref{L2} in $L^\infty (\mathbb{R}^2)$.
For solutions to the fractional diffusion equation $\partial_t u + (-\Delta)^{\alpha/2} u = \nabla \cdot f(u)$ with $1 < \alpha < 2$ and some suitable $f$, general theory of spatial decay is given by Brandolese and Karch \cite{Brndls-Krch}.
This theory is based on the $L^p$-$L^q$ estimates for $e^{-t(-\Delta)^{\alpha/2}}$ and available for \eqref{qg} in the subcritical case $1 < \alpha < 2$ since \eqref{qg} is parabolic in this case.
However, since the nonlinearity balances to the dissipation in $\alpha = 1$, the general theory does not work in the case that $0 < \alpha \le 1$.
More precisely, it is diffucult to estimate the nonlinear term in the integral equation $\int_0^t \nabla e^{-(t-s)(-\Delta)^{\alpha/2}} \cdot f(u) ds$ in usual way, since $\nabla e^{-t(-\Delta)^{\alpha/2}}$ is not integrable near $t = 0$, which requires to estimate $f(u)$ in the weighted Sobolev spaces with some positive differential order.
In particular, we estimate $\| |x|^2 \nabla \theta (t) \|_{L^p (\mathbb{R}^2)}$ by using the energy method (cf.\cite{Y-S-NA}).
Furthermore we prepare some uniform decay properties for $G_\alpha$ (see Lemmas \ref{prop-wt} and \ref{lem1}).
We note that the Moser-Nash iteration method is not used, which is usually employed in order to obtain $L^\infty$ estimate from $L^p$ (e.g.\cite{E-Z,K-K,Og-Y}).
In \cite{Y-S18aX}, the estimate \eqref{L2} is derived by the energy method.
The proof of the uniform estimate \eqref{astn} in the main theorem is based on the $L^p$-$L^q$ argument.
%However, when we try to obtain the $L^\infty$-estimate through the energy methods, Moser-Nash's iteration method do not work.
%We employ the $L^p$-$L^q$ estimates for $e^{-t(-\Delta)^{\alpha/2}} \varphi = G_\alpha (t) * \varphi$ together with the weighted estimates for $\theta$ and $G_\alpha$.
We can express the spatial decay of the solution both in the critical and the supercritical cases.

\begin{theorem}\label{thm}
Let $0 < \alpha \le 1,~ p > 2/\alpha,~ (1+|x|) \theta_0 \in L^1 (\mathbb{R}^2),~ |x|^{3+\alpha} \theta_0 \in L^\infty (\mathbb{R}^2),~ \theta_0 \in H^3 (\mathbb{R}^2)$ and $|x|^2 \nabla \theta_0 \in L^p (\mathbb{R}^2)$.
Assume that $\| \theta_0 \|_{H^3 (\mathbb{R}^2)}$ is sufficiently small, and the solution $\theta$ fulfills \eqref{exist} and \eqref{mass}.
Then
\begin{equation}\label{astn}
	\bigl\| |x|^{3+\alpha} \bigl( \theta (t) - MG_\alpha (t) \bigr) \bigr\|_{L^\infty (\mathbb{R}^2)}
	\le
	C (1+t),
\end{equation}
where $M = \int_{\mathbb{R}^2} \theta_0 (x) dx$.
\end{theorem}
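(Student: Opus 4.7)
The plan is to start from Duhamel's representation
\[
\theta(t,x) - MG_\alpha(t,x) = \int_{\mathbb{R}^2}\bigl(G_\alpha(t,x-y) - G_\alpha(t,x)\bigr)\theta_0(y)\,dy - \int_0^t \bigl(\nabla G_\alpha(t-s) * (\theta\,\nabla^\bot\psi)(s)\bigr)(x)\,ds,
\]
multiply by $|x|^{3+\alpha}$, and take the supremum in $x$ on each piece. The main ingredients are the weighted pointwise bounds $\||z|^{3+\alpha}\nabla G_\alpha(t,z)\|_\infty \le Ct$ and $\||z|^{2+\alpha}G_\alpha(t,z)\|_\infty \le Ct$ provided by Lemmas \ref{prop-wt} and \ref{lem1}, the prior estimates \eqref{mass}--\eqref{wttheta}, and an auxiliary energy-method bound on $\||x|^2\nabla\theta(s)\|_{L^p}$ established in the spirit of \cite{Y-S-NA}, which is where the smallness of $\|\theta_0\|_{H^3}$ is used.

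For the linear piece I would split the $y$-integral into $\{|y|\le|x|/2\}$ and $\{|y|>|x|/2\}$. On the small-$y$ region the mean value theorem together with the weighted bound on $\nabla G_\alpha$ along the segment $[x-y,x]$ converts the difference into at worst $C(1+t)|y|$, which pairs with $(1+|y|)\theta_0\in L^1$. On the large-$y$ region $|x|^{3+\alpha}\le C|y|^{3+\alpha}$, and the hypotheses $|y|^{3+\alpha}\theta_0\in L^\infty$, $\theta_0\in L^1$, combined with $\||x|^{2+\alpha}G_\alpha(t)\|_\infty\le Ct$, yield the desired $C(1+t)$ bound.

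The nonlinear piece is treated by a signed splitting
\[
|x|^{3+\alpha}\nabla G_\alpha(t-s,x-y)\cdot(\theta u)(s,y) = |x-y|^{3+\alpha}\nabla G_\alpha(t-s,x-y)\cdot(\theta u) + \bigl(|x|^{3+\alpha}-|x-y|^{3+\alpha}\bigr)\nabla G_\alpha(t-s,x-y)\cdot(\theta u),
\]
with $u=\nabla^\bot\psi$. The first summand is estimated by Young's inequality and $\||z|^{3+\alpha}\nabla G_\alpha(t-s)\|_\infty\le C(t-s)$ against $\|\theta u\|_{L^1}\le C\|\theta\|_{L^2}^2\le C(1+s)^{-2/\alpha}$; since $2/\alpha>1$ the $s$-integral is $O(1+t)$. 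The second summand is integrated by parts in $y$, using $\nabla\cdot u=0$ and the elementary inequality $||x|^{3+\alpha}-|x-y|^{3+\alpha}|\le C|y|(|x-y|^{2+\alpha}+|y|^{2+\alpha})$, to produce two sub-terms: one with weight $|x-y|^{2+\alpha}$ resting on $G_\alpha$ (controlled by $\||z|^{2+\alpha}G_\alpha\|_\infty\le C(t-s)$ against $\||y|\theta u\|_{L^1}$), and one of the form $G_\alpha(t-s)*(|y|^{3+\alpha} u\cdot\nabla\theta)$. For this last sub-term H\"older gives $\|G_\alpha(t-s)\|_{L^{p'}}\cdot\||y|^{3+\alpha}u\cdot\nabla\theta\|_{L^p}$, where $\|G_\alpha(t-s)\|_{L^{p'}}=C(t-s)^{-2/(\alpha p)}$ is integrable near $s=t$ precisely because $p>2/\alpha$; the weighted nonlinearity is then handled by splitting $|y|^{3+\alpha}$, invoking \eqref{wttheta}, $L^p$-boundedness of the Riesz transforms, and the auxiliary energy bound on $\||y|^2\nabla\theta\|_{L^p}$.

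The principal obstacle is precisely this last sub-term. In the regime $0<\alpha\le1$ no $L^q$ norm of $\nabla G_\alpha(t-s)$ is time-integrable near $s=t$, so a naive Young estimate after the triangle-inequality split $|x|^{3+\alpha}\le C(|x-y|^{3+\alpha}+|y|^{3+\alpha})$ cannot close. The resolution is the signed splitting followed by integration by parts, which transfers the derivative from $G_\alpha$ to the nonlinearity; the kernel then carries at most weight $|z|^{2+\alpha}$, which is sharp since $G_\alpha(1,z)\sim|z|^{-(2+\alpha)}$. The remaining weight $|y|^{3+\alpha}$ lands on $u\cdot\nabla\theta$ where it must be absorbed by weighted Sobolev estimates on $\theta$, because the nonlocal Riesz transform in $u$ does not preserve polynomial pointwise decay and the weight cannot simply be moved from $\theta$ onto $u$. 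The hypothesis $p>2/\alpha$ enters decisively on two fronts: time-integrability of $\|G_\alpha(t-s)\|_{L^{p'}}$ and $L^p$-boundedness of the Riesz transform that controls $u$.
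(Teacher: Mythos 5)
Your overall route is essentially the paper's own. The linear part is treated exactly as in Lemma \ref{lem1} (two-region split plus the mean value theorem); note only that the kernel bounds $\||z|^{3+\alpha}\nabla G_\alpha(t)\|_{L^\infty}\le Ct$ and $\||z|^{2+\alpha}G_\alpha(t)\|_{L^\infty}\le Ct$ come from the scaling \eqref{scG} together with the pointwise decay \eqref{decayG}, not from Proposition \ref{prop-wt}, which is a statement about $\theta$. For the nonlinear part, your signed splitting followed by integration by parts using $\nabla\cdot\nabla^\bot\psi=0$ is the same maneuver as the paper's identity $x_j v=-\int_0^t (x_j\nabla G_\alpha)(t-s)*(\theta\nabla^\bot\psi)\,ds-\int_0^t G_\alpha(t-s)*(\theta(\nabla^\bot\psi)_j)\,ds-\int_0^t G_\alpha(t-s)*(x_j\nabla\theta\cdot\nabla^\bot\psi)\,ds$, which produces precisely the six terms of \eqref{bss}; you also correctly identify the two decisive uses of $p>2/\alpha$ and the need for the uniform energy bound $\||x|^2\nabla\theta\|_{L^\infty(0,\infty;L^p)}<\infty$, which is the paper's \eqref{tokyu}, proved there by the Stroock--Varopoulos inequality and the commutator $[(-\Delta)^{\alpha/2},|x|^2]$.

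There is, however, a genuine gap at exactly the point you single out as ``the principal obstacle.'' To close $\||y|^{3+\alpha}\,u\cdot\nabla\theta\|_{L^p}$, the weighted information carried by $\theta$ alone --- \eqref{wttheta} and $\||x|^2\nabla\theta\|_{L^p}\le C$ --- supplies weight at most $2$, so a factor of at least $|y|^{1+\alpha}$ must land on $u=\nabla^\bot\psi$; unweighted $L^p$-boundedness of the Riesz transforms cannot produce this, and no weighted Calder\'on--Zygmund bound is available either, since for $\alpha=1$ the weight $|x|^{(1+\alpha)p}$ violates the Muckenhoupt condition for every $p$. You explicitly assert that the weight ``cannot simply be moved from $\theta$ onto $u$,'' but that transfer is exactly how the paper resolves the difficulty: using the Duhamel formula \eqref{MS}, the scaling identity $\||x|^2R_lG_\alpha(t)\|_{L^\infty}=\||x|^2R_lG_\alpha(1)\|_{L^\infty}$, the H\"ormander--Mikhlin estimate of Lemma \ref{lem-HM} (which yields $|x|^2R_lG_\alpha(1)\in L^\infty(\mathbb{R}^2)$), and the smallness of $\|\theta_0\|_{H^3}$, it bootstraps the uniform bound $\||x|^2\nabla^\bot\psi(t)\|_{L^\infty}\le C$, after which the remaining weight is absorbed by the interpolation $\||x|^{1+\alpha}\nabla\theta\|_{L^p}\le\||x|^2\nabla\theta\|_{L^p}^{(1+\alpha)/2}\|\nabla\theta\|_{L^p}^{(1-\alpha)/2}$. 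Moreover, this same bound on $|x|^2\nabla^\bot\psi$ is indispensable \emph{inside} the energy estimate you treat as a black box: in the paper's derivation of \eqref{tokyu} it is what controls the critical term \eqref{bsg2}, so the missing ingredient propagates into your auxiliary bound as well. With that one weighted velocity estimate supplied, the rest of your sketch matches the paper's proof of Proposition \ref{prop-v2}.
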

We emphasize that $\| |x|^{3+\alpha} G_\alpha (t) \|_{L^\infty (\mathbb{R}^2)} = + \infty$.
Thus Theorem \ref{thm} states that $\theta$ and $MG_\alpha$ are canceled uniformly in far field.
%The assertion also is crucial since the solution of the linear problem fulfills that $|x|^3 (G_\alpha * \theta_0 - M G_\alpha ) \in L^\infty ((0,\infty) \times\mathbb{R}^2)$.
%On the other hand, when $\alpha = 1$, the fundamental solution $G_\alpha$ is the Poisson kernel $P$ which satisfies $\| |x|^3 P (t) \|_{L^\infty (\mathbb{R}^2)} = t \| |x|^3 P (1) \|_{L^\infty (\mathbb{R}^2)}$ and $\| |x|^3 P (1) \|_{L^\infty (\mathbb{R}^2)} < + \infty$.
%Therefore Theorem \ref{thm} never give far field asymptotics for the critical case.
%We treat the case $\alpha = 1$ in the following theorem.
%%
%\begin{theorem}\label{thm2}
%Let $\alpha = 1,~ p > 2,~ (1+|x|) \theta_0 \in L^1 (\mathbb{R}^2),~ |x|^4 \theta_0 \in L^\infty (\mathbb{R}^2),~ \theta_0 \in H^3 (\mathbb{R}^2)$ and $|x|^2 \nabla \theta_0 \in L^p (\mathbb{R}^2)$.
%Assume that the solution $\theta$ fulfills \eqref{exist}, \eqref{mass}, and \eqref{drbdd} for $\sigma = 3$.
%Then
%\begin{equation}\label{astn2}
%	\bigl\| |x|^4 \bigl( \theta (t) - MP (t) \bigr) \bigr\|_{L^\infty (\mathbb{R}^2)}
%	\le
%	C (1+t)
%\end{equation}
%holds for $t > 0$, where $M = \int_{\mathbb{R}^2} \theta_0 (x) dx$.
%\end{theorem}
%%
We remark that the assertions in this theorem is sharp in time.
Indeed, \eqref{astn} for $G_\alpha (t) * \theta_0$ instead of $\theta$ is fulfilled (see Lemma \ref{lem1} in Section 2), and the scaling property of $G_\alpha$ guarantees the sharpness of those estimates.
Namely, from the mean value theorem, we expect that the decay-rate of the top term of $G_\alpha (t) * \theta_0 - M G_\alpha (t)$ is given by one of $\nabla G_\alpha (t)$, and $\| |x|^{3+\alpha} \nabla G_\alpha (t) \|_{L^\infty (\mathbb{R}^2)} = t \| |x|^{3+\alpha} \nabla G_\alpha (1) \|_{L^\infty (\mathbb{R}^2)}$ and $\| |x|^{3+\alpha} \nabla G_\alpha (1) \|_{L^\infty (\mathbb{R}^2)} < +\infty$.
The details are in the proof of Lemma \ref{lem1}.
Furthermore \eqref{astn} is optimal also in $x$ since $\| |x|^{3+\alpha+\varepsilon} \nabla G_\alpha (t) \|_{L^\infty (\mathbb{R}^2)} = + \infty$ for any $\varepsilon > 0$.
A coupling of this theorem and the property of $G_\alpha$ provides the obvious decay as follows.
\begin{cor}\label{cor}
Let $\varrho = \varrho(x) > 0$ be radially symmetric and monotone increasing in $|x|$, and satisfy that $\varrho (x) \to + \infty$ as $|x| \to +\infty$.
Let $0 < \alpha \le 1,~ p > 2/\alpha,~ \theta_0 \in L^1 (\mathbb{R}^2) \cap H^3 (\mathbb{R}^2),~ |x|^{2+\alpha} \varrho\, \theta_0 \in L^\infty (\mathbb{R}^2)$ and $|x|^2 \nabla \theta_0 \in L^p (\mathbb{R}^2)$.
Assume that the solution $\theta$ fulfills \eqref{exist}, \eqref{mass}, and \eqref{drbdd} for $\sigma = 3$.
Then, for any fixed $t>0$,
\[
	|x|^{2+\alpha} \theta (t,x)
	\to C_\alpha M t
\]
as $|x| \to +\infty$, where $M = \int_{\mathbb{R}^2} \theta_0 (x) dx$ and $C_\alpha = \alpha 2^{\alpha- 1} \pi^{-2} \sin \tfrac{\alpha \pi}2 \Gamma (1+\tfrac{\alpha}2) \Gamma (\tfrac\alpha{2})$.
\end{cor}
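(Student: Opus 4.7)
The plan is to couple the uniform far-field bound of Theorem \ref{thm} with the precise asymptotic of the fractional heat kernel $G_\alpha$ at infinity. The central ingredient is the classical pointwise limit
\[
|x|^{2+\alpha}G_\alpha(t,x) \longrightarrow C_\alpha t \qquad \text{as } |x|\to\infty
\]
for each fixed $t>0$, with $C_\alpha$ exactly as in the statement. Granting this, dividing the bound \eqref{astn} by $|x|$ yields
\[
|x|^{2+\alpha}\bigl|\theta(t,x)-MG_\alpha(t,x)\bigr| \le C(1+t)|x|^{-1} \longrightarrow 0,
\]
so $|x|^{2+\alpha}\theta(t,x) = M\,|x|^{2+\alpha}G_\alpha(t,x) + o(1) \to C_\alpha M t$, which is the claim.

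To derive the asymptotic of $G_\alpha$, I would start from its Fourier representation and split
\[
e^{-t|\xi|^\alpha} = 1 - t|\xi|^\alpha + \bigl(e^{-t|\xi|^\alpha}-1+t|\xi|^\alpha\bigr).
\]
The linear term contributes, through the tempered distributional Fourier transform of the homogeneous symbol $|\xi|^\alpha$ in $\mathbb{R}^2$, a function proportional to $t|x|^{-(2+\alpha)}$; the standard Riesz-potential formula produces exactly the constant $C_\alpha = \alpha 2^{\alpha-1}\pi^{-2}\sin(\alpha\pi/2)\Gamma(1+\alpha/2)\Gamma(\alpha/2)$ (as a sanity check, for $\alpha=1$ this recovers $1/(2\pi)$, the prefactor in the large-$|x|$ expansion of the Poisson kernel). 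The remainder integral is $o(|x|^{-(2+\alpha)})$: the integrand vanishes like $|\xi|^{2\alpha}$ near $\xi=0$ and is smooth and rapidly decreasing away from the origin, so repeated integration by parts in $\xi$ yields any desired polynomial rate of decay in $|x|$, in particular one faster than $|x|^{-(2+\alpha)}$.

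The main obstacle is that the hypothesis $|x|^{2+\alpha}\varrho\,\theta_0\in L^\infty$ in the corollary is strictly weaker than the hypothesis $|x|^{3+\alpha}\theta_0\in L^\infty$ in Theorem \ref{thm}, since $\varrho$ is permitted to grow arbitrarily slowly; hence Theorem \ref{thm} cannot be invoked verbatim. To bridge this gap I would revisit the proof of Theorem \ref{thm} and track how the polynomial weight $|x|^{3+\alpha}$ enters. Because the energy estimates and the $L^p$--$L^q$ bounds for $G_\alpha$ used there rely only on the radial symmetry and monotonicity of the weight, the same argument with $|x|^{3+\alpha}$ replaced throughout by the radial monotone-increasing weight $|x|^{2+\alpha}\varrho(x)$ should yield
\[
\bigl\||x|^{2+\alpha}\varrho(x)\bigl(\theta(t)-MG_\alpha(t)\bigr)\bigr\|_{L^\infty(\mathbb{R}^2)} \le C(1+t).
\]
Dividing by $\varrho(x)\to\infty$ then gives the pointwise decay $|x|^{2+\alpha}|\theta(t,x)-MG_\alpha(t,x)|\to 0$, and combined with the kernel asymptotic this yields the corollary.
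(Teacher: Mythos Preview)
Your identification of the kernel asymptotic $|x|^{2+\alpha}G_\alpha(t,x)\to C_\alpha t$ (which is \eqref{B-G} in the paper) and the reduction to showing $|x|^{2+\alpha}\bigl|\theta(t,x)-MG_\alpha(t,x)\bigr|\to 0$ are both correct. The gap is in your proposed bridge for the weaker hypothesis.

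The claim that the proof of Theorem \ref{thm} carries over with $|x|^{3+\alpha}$ replaced by $|x|^{2+\alpha}\varrho(x)$ is not just unjustified, it is false. The exponent $3+\alpha$ is not an artifact of the method; it exactly matches the decay $|\nabla G_\alpha(t,x)|\sim c\,t|x|^{-(3+\alpha)}$, and the paper explicitly notes $\||x|^{3+\alpha+\varepsilon}\nabla G_\alpha(t)\|_{L^\infty}=+\infty$ for every $\varepsilon>0$. Concretely, take $\varrho(x)=1+|x|^2$ (permitted by the corollary) and any compactly supported $\theta_0$ with $m=\int y\,\theta_0(y)\,dy\neq 0$. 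Then $G_\alpha(t)*\theta_0-MG_\alpha(t)\sim -m\cdot\nabla G_\alpha(t,x)$ at infinity, so $|x|^{2+\alpha}\varrho(x)\bigl|G_\alpha*\theta_0-MG_\alpha\bigr|\sim c\,t|m|\,|x|\to\infty$. The uniform $\varrho$-weighted bound you aim for therefore does not hold, and radial symmetry and monotonicity of the weight are not what drives the original argument.

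The paper instead splits along the Duhamel formula \eqref{MS}, writing $\theta-MG_\alpha=(G_\alpha*\theta_0-MG_\alpha)+v$. The crucial observation is that the nonlinear piece $v$ already satisfies the strong estimate $\||x|^{3+\alpha}v(t)\|_{L^\infty}\le C(1+t)$ of Proposition \ref{prop-v2} under merely $|x|^{2+\alpha}\theta_0\in L^\infty$, which follows from the corollary's hypothesis because $\varrho(x)\ge\varrho(0)>0$; hence $|x|^{2+\alpha}v(t,x)\to 0$ is immediate. The full strength of the assumption $|x|^{2+\alpha}\varrho\,\theta_0\in L^\infty$ is used only for the linear piece, and there a purely qualitative argument suffices (Lemma \ref{lem2}): on $\{|y|>|x|/2\}$ one exploits $\varrho(x/2)^{-1}\to 0$ together with $\int_{|y|>|x|/2}|\theta_0|\to 0$, while on $\{|y|\le|x|/2\}$ one gains a factor $|x|^{-\varepsilon}$ from the spare room in the decay of $\nabla G_\alpha$. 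No uniform $\varrho$-weighted bound on $\theta-MG_\alpha$ is ever asserted.
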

We should remark that $\theta$ on Theorem \ref{thm} and Corollary \ref{cor} fulfill \eqref{wttheta} for any $q > 2/\alpha$.
Indeed, if $|x|^2 \theta_0 \in L^q (\mathbb{R}^2)$, then Proposition \ref{prop-wt} in Section 2 yields \eqref{wttheta}, and we confirm that
\[
	\bigl\| |x|^2 \theta_0 \bigr\|_{L^q (\mathbb{R}^2)}
	\le
	\bigl\| (1+ |x|)^{-\alpha} \bigr\|_{L^q (\mathbb{R}^2)} \bigl\| |x|^2 (1+|x|)^{\alpha} \theta_0 \bigr\|_{L^\infty (\mathbb{R}^2)}
	< + \infty.
\]
%%
%We can also omit the assumption \eqref{drbdd} from Theorem \ref{thm} and Corollary \ref{cor} if the initial-data is sufficiently small (see Proposition \ref{tr} in Section 2).\\
%%

\noindent
{\bf Notation.}
The Fourier transform and its inverse are defined by $\mathcal{F} [\varphi] (\xi) = (2\pi)^{-1} \int_{\mathbb{R}^2} e^{-ix\cdot\xi} \varphi (x) dx$ and $\mathcal{F}^{-1} [\varphi] (x) = (2\pi)^{-1} \int_{\mathbb{R}^2} e^{ix\cdot\xi} \varphi (\xi) d\xi$, where $i = \sqrt{-1}$.
The derivations are abbreviated by $\partial_t = \partial / \partial t,~ \partial_j = \partial / \partial x_j$ for $j = 1,2,~ \nabla = (\partial_1,\partial_2),~ \nabla^\bot = (-\partial_2,\partial_1)$ and $\Delta = \partial_1^2 + \partial_2^2$.
The fractional Laplacian and its inverse, and the Riesz transform are defined by $(-\Delta)^{\alpha/2} \varphi = \mathcal{F}^{-1} [|\xi|^\alpha \mathcal{F} [\varphi]],~ (-\Delta)^{-\sigma/2} \varphi = \mathcal{F}^{-1} [|\xi|^{-\sigma} \mathcal{F} [\varphi]]$ for $0 < \sigma < 2$, and $R_j \varphi = \partial_j (-\Delta)^{-1/2} \varphi = \mathcal{F}^{-1} [i\xi_j |\xi|^{-1} \mathcal{F} [\varphi]]$ for $j = 1,2$, respectively.
The H\"older conjugate of $1 \le p \le \infty$ is denoted by $p'$, i.e., $\frac1p + \frac1{p'} = 1$.
For $\beta = (\beta_1,\beta_2) \in \mathbb{Z}_+^2 = (\mathbb{N} \cup \{ 0 \})^2,~ |\beta| = \beta_1 + \beta_2$.
For some operators $A$ and $B,~ [A,B] = AB - BA$.
Various positive constants and suitable fuctions are denoted by $C$ and $\varphi$, respectively.

\section{Preliminaries}
The Duhamel principle yields that
\begin{equation}\label{MS}
	\theta (t)
	=
	G_\alpha (t) * \theta_0
	-
	\int_0^t
		\nabla G_\alpha (t-s) * (\theta \nabla^\bot \psi) (s)
	ds,
\end{equation}
where $G_\alpha (t) = (2\pi)^{-1} \mathcal{F}^{-1} [e^{-t|\xi|^\alpha}]$.
It is well-known that
\begin{equation}\label{scG}
	G_\alpha (t,x) = t^{-2/\alpha} G_\alpha (1,t^{-1/\alpha} x)
\end{equation}
for $(t,x) \in (0,+\infty) \times \mathbb{R}^2$, and
\begin{equation}\label{decayG}
	\bigl| \nabla^\beta G_\alpha (1,x) \bigr|
	\le
	C_\beta (1+|x|^2)^{-1-\frac\alpha{2}-\frac{|\beta|}2}
\end{equation}
for $\beta \in \mathbb{Z}_+^2$ and $x \in \mathbb{R}^2$ (see\cite{Klkltsv,Y-S18aX}).
The spatial decay of $G_\alpha$ is published as the following (cf.\cite{Blmntr-Gtr}):
\begin{equation}\label{B-G}
	|x|^{2+\alpha} G_\alpha (t,x) \to C_\alpha t
\end{equation}
as $|x| \to +\infty$, where $C_\alpha$ is introduced in Corollary \ref{cor}.
The following lemma plays a crucial role in the energy estimates.
\begin{lemma}[Stroock-Varopoulos inequality \cite{Crdb-Crdb,J05,L-S}]\label{lemC-C}
	Let $0 \le \alpha \le 2,~ q \ge 2$ and $f \in W^{\alpha,q} (\mathbb{R}^2)$.
	Then
	\[
		\int_{\mathbb{R}^2}
			|f|^{q-2} f (-\Delta)^{\alpha/2} f
		dx
		\ge
		\frac2q \int_{\mathbb{R}^2}
			\left| (-\Delta)^{\alpha/4} (|f|^{q/2}) \right|^2
		dx
	\]
	holds.
\end{lemma}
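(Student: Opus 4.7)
The plan is to recast both sides of the inequality as double integrals against the common fractional kernel $|x-y|^{-(2+\alpha)}$, and then reduce the assertion to a single pointwise algebraic inequality on $\mathbb{R}$, which I can settle by Cauchy--Schwarz. The endpoint values $\alpha=0$ and $\alpha=2$ will be handled separately by direct computation.

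I would first dispose of the endpoints. The case $\alpha=0$ gives $\|f\|_{L^q}^{q}\ge\frac{2}{q}\|f\|_{L^q}^{q}$, which holds since $q\ge2$. For $\alpha=2$, integration by parts yields $\int|f|^{q-2}f\,(-\Delta)f\,dx=(q-1)\int|f|^{q-2}|\nabla f|^{2}\,dx$, while a direct chain-rule computation gives $\int|\nabla(|f|^{q/2})|^{2}\,dx=\frac{q^{2}}{4}\int|f|^{q-2}|\nabla f|^{2}\,dx$; the claim then reduces to $q-1\ge q/2$.

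For $0<\alpha<2$, I would invoke the singular-integral representation $(-\Delta)^{\alpha/2}f(x)=c_{\alpha}\,\mathrm{P.V.}\int\frac{f(x)-f(y)}{|x-y|^{2+\alpha}}\,dy$ together with its Plancherel companion $\int|(-\Delta)^{\alpha/4}g|^{2}\,dx=\frac{c_{\alpha}}{2}\iint\frac{(g(x)-g(y))^{2}}{|x-y|^{2+\alpha}}\,dx\,dy$. Multiplying the former by $|f|^{q-2}f$ and symmetrizing in $(x,y)$ rewrites the left-hand side of the lemma as $\frac{c_{\alpha}}{2}\iint(|f(x)|^{q-2}f(x)-|f(y)|^{q-2}f(y))(f(x)-f(y))|x-y|^{-(2+\alpha)}\,dx\,dy$, while setting $g=|f|^{q/2}$ in the companion identity expresses the right-hand side with the identical kernel. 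Since the common constant $c_{\alpha}$ cancels, the lemma reduces to the pointwise inequality
\[
(|a|^{q-2}a-|b|^{q-2}b)(a-b)\ \ge\ \frac{2}{q}\bigl(|a|^{q/2}-|b|^{q/2}\bigr)^{2}\qquad(a,b\in\mathbb{R}).
\]

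For the pointwise estimate, consider first the same-sign case $0\le b\le a$; Cauchy--Schwarz applied to the identity $a^{q/2}-b^{q/2}=\frac{q}{2}\int_{b}^{a}t^{q/2-1}\,dt$ yields $(a^{q/2}-b^{q/2})^{2}\le\frac{q^{2}}{4(q-1)}(a-b)(a^{q-1}-b^{q-1})$, so the desired bound holds with the sharper constant $\frac{4(q-1)}{q^{2}}$, which dominates $\frac{2}{q}$ precisely when $q\ge2$. In the opposite-sign case (say $a\ge0\ge b$), the left-hand side becomes $(a^{q-1}+|b|^{q-1})(a+|b|)$, which exceeds its same-sign analogue on $(a,|b|)$ by the nonnegative cross terms $2a^{q-1}|b|+2a|b|^{q-1}$, so the previous bound propagates. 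The main obstacle I anticipate is keeping the normalization $c_{\alpha}$ consistent so that it cancels cleanly on both sides, and justifying the symmetrization rigorously for general $f\in W^{\alpha,q}$; both are standard once the singular integral is read in the principal-value sense.
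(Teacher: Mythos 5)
Your proof is correct, but note that the paper never proves Lemma~\ref{lemC-C} at all: it is quoted from the literature \cite{Crdb-Crdb,J05,L-S}, where it is obtained either from pointwise convexity inequalities for $(-\Delta)^{\alpha/2}$ (C\'ordoba--C\'ordoba, Ju) or from an abstract sub-Markovian semigroup/Dirichlet-form argument (Liskevich--Semenov). Your route is the direct real-variable one, and it goes through: the normalization issue you flag resolves itself because $(-\Delta)^{\alpha/2}=(-\Delta)^{s}$ and $(-\Delta)^{\alpha/4}=(-\Delta)^{s/2}$ with $s=\alpha/2$, so the singular-integral representation of the former and the Gagliardo-seminorm identity for the latter both carry the \emph{same} constant $C(2,\alpha/2)$ attached to the kernel $|x-y|^{-(2+\alpha)}$, and the lemma indeed reduces to the scalar inequality $(|a|^{q-2}a-|b|^{q-2}b)(a-b)\ge\frac{2}{q}\bigl(|a|^{q/2}-|b|^{q/2}\bigr)^{2}$. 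Your Cauchy--Schwarz computation settles this with the sharper constant $\frac{4(q-1)}{q^{2}}\ge\frac{2}{q}$ for $q\ge2$, and the mixed-sign reduction by discarding the nonnegative cross terms $2a^{q-1}|b|+2a|b|^{q-1}$ is correct (the case $a,b\le0$ follows from oddness of $t\mapsto|t|^{q-2}t$, which you should state explicitly). The remaining loose end you identify --- rigor of the symmetrization for general $f\in W^{\alpha,q}(\mathbb{R}^2)$ --- is genuine but standard: since $t\mapsto|t|^{q-2}t$ is nondecreasing, the symmetrized integrand is pointwise nonnegative, so Tonelli applies, and one concludes by proving the identities for smooth compactly supported $f$ and passing to the limit by density; the endpoint computations at $\alpha=0$ and $\alpha=2$ are also fine. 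Compared with the cited references, your argument is more elementary and self-contained and even recovers the sharp Stroock--Varopoulos constant, whereas the semigroup route buys generality (it applies to any sub-Markovian generator, not only $(-\Delta)^{\alpha/2}$); for the purposes of this paper, where only the stated constant $\frac2q$ for $(-\Delta)^{\alpha/2}$ on $\mathbb{R}^2$ is used, your proof would serve as a complete substitute for the citation.
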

We need the following inequalities of Sobolev type.
\begin{lemma}[Hardy-Littlewood-Sobolev's inequality \cite{S,Z}]\label{HLS}~
	Let $0 < \sigma < 2,~ 1 < p < \frac{2}{\sigma}$ and $\frac1{p_*} = \frac1p - \frac{\sigma}2$.
	Then there exists a positive constant $C$ such that
	\[
		\bigl\| (-\Delta)^{-\sigma/2} \varphi \bigr\|_{L^{p_*} (\mathbb{R}^2)}
		\le
		C \bigl\| \varphi \bigr\|_{L^p (\mathbb{R}^2)}
	\]
	for any $\varphi \in L^p (\mathbb{R}^2)$.
\end{lemma}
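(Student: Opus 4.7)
The plan is to realize $(-\Delta)^{-\sigma/2}$ as convolution with a Riesz kernel and then establish the classical Hardy--Littlewood--Sobolev inequality through a pointwise interpolation between the Hardy--Littlewood maximal function and $\|\varphi\|_{L^p}$. On $\mathbb{R}^2$ one has $\mathcal{F}^{-1}[|\xi|^{-\sigma}](x) = c_\sigma |x|^{\sigma-2}$ in the sense of tempered distributions for $0<\sigma<2$, so the operator in the statement is the Riesz potential
\[
I_\sigma \varphi(x) = c_\sigma \int_{\mathbb{R}^2} |x-y|^{\sigma-2}\,\varphi(y)\,dy,
\]
and the task reduces to proving $I_\sigma : L^p(\mathbb{R}^2) \to L^{p_*}(\mathbb{R}^2)$ with $1/p_* = 1/p - \sigma/2$.

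The core step is a pointwise bound obtained by splitting, for each $x$ and each $R>0$, the integral $I_\sigma\varphi(x)$ into the near piece $|x-y|<R$ and the far piece $|x-y|\ge R$. On the near piece I would use a dyadic decomposition into shells $\{2^{-k-1}R \le |x-y| < 2^{-k}R\}$: the elementary bound $|x-y|^{\sigma-2} \le (2^{-k-1}R)^{\sigma-2}$ combined with the fact that the shell integral of $|\varphi|$ is dominated by $C(2^{-k}R)^2\, \mathcal{M}\varphi(x)$, where $\mathcal{M}$ denotes the Hardy--Littlewood maximal operator, sums via a geometric series in $k$ to a bound $C R^\sigma \mathcal{M}\varphi(x)$. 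On the far piece Hölder's inequality against $\|\varphi\|_{L^p}$ yields the factor $\bigl(\int_{|z|\ge R} |z|^{(\sigma-2)p'}\,dz\bigr)^{1/p'}$; the assumption $p<2/\sigma$ is equivalent to $(2-\sigma)p' > 2$, which is precisely what makes this radial integral finite and evaluates it to $C R^{\sigma - 2/p}$. Balancing the two estimates by choosing $R^{2/p}$ proportional to $\|\varphi\|_{L^p}/\mathcal{M}\varphi(x)$ delivers the interpolation
\[
\bigl| (-\Delta)^{-\sigma/2} \varphi(x) \bigr| \le C\, \mathcal{M}\varphi(x)^{1 - \sigma p / 2}\, \|\varphi\|_{L^p}^{\sigma p / 2}.
\]

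To finish, I would raise this inequality to the power $p_*$ and integrate in $x$. The algebraic identity $p_*(1-\sigma p/2) = p$ is exactly the scaling hypothesis $1/p_* = 1/p - \sigma/2$, so the right-hand side collapses to $C\|\varphi\|_{L^p}^{\sigma p p_*/2}\,\|\mathcal{M}\varphi\|_{L^p}^p$. Since $p>1$, the strong Hardy--Littlewood maximal inequality $\|\mathcal{M}\varphi\|_{L^p} \le C\|\varphi\|_{L^p}$ applies, and collecting exponents leaves the desired $\|(-\Delta)^{-\sigma/2}\varphi\|_{L^{p_*}} \le C\|\varphi\|_{L^p}$. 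The only real subtlety is an arithmetic one: the two hypotheses $1<p$ and $p<2/\sigma$ play structurally distinct roles---respectively, strong $L^p$-boundedness of $\mathcal{M}$ and integrability of the far piece---and both are sharp, since the endpoints $p=1$ (weak-type failure of $\mathcal{M}$) and $p=2/\sigma$ (Riesz potentials do not map into $L^\infty$) are well known to be genuine.
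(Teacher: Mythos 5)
Your proposal is correct, and there is nothing in the paper to compare it against: the paper states this lemma without proof, quoting it as a classical result from the cited references (Stein; Ziemer). Your argument --- realizing $(-\Delta)^{-\sigma/2}$ as the Riesz potential $c_\sigma |x|^{\sigma-2}*\varphi$, splitting at radius $R$, dominating the near piece by $CR^\sigma \mathcal{M}\varphi(x)$ via dyadic shells and the far piece by $CR^{\sigma-2/p}\|\varphi\|_{L^p}$ via H\"older (where $(2-\sigma)p'>2$ is indeed equivalent to $p<2/\sigma$), optimizing $R$ to get the pointwise bound $\mathcal{M}\varphi(x)^{1-\sigma p/2}\|\varphi\|_{L^p}^{\sigma p/2}$, and then using $p_*(1-\sigma p/2)=p$ together with the strong maximal inequality --- is precisely the standard Hedberg-style proof found in the cited textbook source, with all exponent arithmetic checking out.
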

\begin{lemma}[Gagliardo-Nirenberg inequality \cite{H-Y-Z,K-PP,M-N-S-S}]\label{GN}
	Let $0 < \sigma < s < 2,~ 1 < p_1, p_2 < \infty$ and $\frac1p = (1-\frac\sigma{s}) \frac1{p_1} + \frac\sigma{s} \frac1{p_2}$.
	Then
	\[
		\bigl\| (-\Delta)^{\sigma/2} \varphi \bigr\|_{L^p (\mathbb{R}^2)}
		\le
		C \bigl\| \varphi \bigr\|_{L^{p_1} (\mathbb{R}^2)}^{1-\frac\sigma{s}}
		\bigl\| (-\Delta)^{s/2} \varphi \bigr\|_{L^{p_2} (\mathbb{R}^2)}^{\frac\sigma{s}}
	\]
	holds.
\end{lemma}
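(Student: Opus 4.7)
The plan is to prove Lemma \ref{GN} via a homogeneous Littlewood--Paley decomposition combined with Bernstein's inequality and a blockwise Lyapunov interpolation argument. Fix a smooth radial bump $\psi$ supported in $\{1/2 \le |\xi| \le 2\}$ with $\sum_{j \in \mathbb{Z}}\psi(2^{-j}\xi) = 1$ on $\mathbb{R}^2\setminus\{0\}$, and set $\Delta_j\varphi := \mathcal{F}^{-1}[\psi(2^{-j}\cdot)\mathcal{F}[\varphi]]$. Three standard tools are needed: Bernstein's inequality $\|\Delta_j g\|_{L^r}\le C\,2^{2j(1/q-1/r)}\|\Delta_j g\|_{L^q}$ for $1\le q\le r\le\infty$; the two-sided symbol bound $\|(-\Delta)^{\tau/2}\Delta_j g\|_{L^q}\sim 2^{j\tau}\|\Delta_j g\|_{L^q}$ for $\tau\in\mathbb{R}$; and the Littlewood--Paley characterization $\|g\|_{L^p}\sim\|(\sum_j |\Delta_j g|^2)^{1/2}\|_{L^p}$ valid for $1<p<\infty$.

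Without loss of generality assume $p_1\le p_2$; the exponent relation then forces $p_1\le p\le p_2$. On each dyadic block apply Lyapunov's inequality with $\theta=\sigma/s$,
\[
\|\Delta_j(-\Delta)^{\sigma/2}\varphi\|_{L^p}
\le
\|\Delta_j(-\Delta)^{\sigma/2}\varphi\|_{L^{p_1}}^{1-\theta}\|\Delta_j(-\Delta)^{\sigma/2}\varphi\|_{L^{p_2}}^{\theta},
\]
and then reshuffle the derivative via the symbol bounds: $\|\Delta_j(-\Delta)^{\sigma/2}\varphi\|_{L^{p_1}}\le C\,2^{j\sigma}\|\Delta_j\varphi\|_{L^{p_1}}$ and $\|\Delta_j(-\Delta)^{\sigma/2}\varphi\|_{L^{p_2}}\le C\,2^{j(\sigma-s)}\|\Delta_j(-\Delta)^{s/2}\varphi\|_{L^{p_2}}$. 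The scaling relation on the exponents makes the net $j$-factor vanish, since $\sigma(1-\theta)+(\sigma-s)\theta=0$, so combined with the boundedness of $\Delta_j$ on $L^{p_1}$ and $L^{p_2}$ (which holds because $1<p_1,p_2<\infty$) this yields a uniform-in-$j$ block estimate by $C\|\varphi\|_{L^{p_1}}^{1-\sigma/s}\|(-\Delta)^{s/2}\varphi\|_{L^{p_2}}^{\sigma/s}$.

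Since a uniform-in-$j$ bound is not summable directly, the argument is completed by splitting the Littlewood--Paley sum at a threshold $N$ and summing two geometric series: low frequencies $j\le N$ are controlled via Bernstein $L^{p_1}\to L^p$ (allowed because $p_1\le p$) by $C\,2^{j(\sigma+2/p_1-2/p)}\|\varphi\|_{L^{p_1}}$, and high frequencies $j>N$ are controlled through the Riesz-potential identity $(-\Delta)^{\sigma/2}=(-\Delta)^{-(s-\sigma)/2}(-\Delta)^{s/2}$ which produces a factor $2^{-j(s-\sigma)}$ for summability. Balancing $N$ against the ratio of the two norms reproduces the product of powers with exponents $1-\sigma/s$ and $\sigma/s$, and the square-function characterization finally converts the dyadic estimate into the genuine $L^p$ bound on $(-\Delta)^{\sigma/2}\varphi$.

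The main obstacle is the high-frequency step: a naive bound on $\|\Delta_j(-\Delta)^{\sigma/2}\varphi\|_{L^p}$ by $\|(-\Delta)^{s/2}\varphi\|_{L^{p_2}}$ would need Bernstein in the direction $L^{p_2}\to L^p$, which fails when $p<p_2$. The remedy is precisely to work through the square-function formulation and invoke the Fefferman--Stein vector-valued maximal inequality, or equivalently to realize the inequality as a real-interpolation statement between $L^{p_1}$ and $\dot W^{s,p_2}$. This interpolation perspective also explains why the strict bounds $1<p_1,p_2<\infty$ and $0<\sigma<s<2$ are necessary: the Riesz-potential multipliers of order $|\xi|^{-(s-\sigma)}$ and the Littlewood--Paley square function must operate as Calder\'on--Zygmund-type objects on the relevant $L^p$ scale, which requires these open ranges.
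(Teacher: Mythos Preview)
The paper does not prove Lemma~\ref{GN}; it is quoted from the cited references \cite{H-Y-Z,K-PP,M-N-S-S} without argument. So there is nothing in the paper to compare against, and your proposal must be judged on its own.

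Your Littlewood--Paley strategy is the standard one and is in the right direction, but the write-up contains a genuine gap at exactly the point you flag. You split at a threshold $N$ and then say you ``balance $N$ against the ratio of the two norms''; this is a \emph{global} choice of $N$, and with a global $N$ the high-frequency tail still sits in $L^p$ while the available control is in $L^{p_2}$ with $p<p_2$. No amount of geometric summation in $j$ repairs that mismatch, because the obstruction is not summability but the wrong Lebesgue exponent. Invoking Fefferman--Stein or ``real interpolation'' as a black box in the last paragraph does not close the argument; it replaces one unproved statement by another of the same strength. Also, ``without loss of generality $p_1\le p_2$'' is not justified: the roles of $p_1$ and $p_2$ are not interchangeable, since one carries zero derivatives and the other carries $s$ derivatives.

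The clean fix, which is implicit in your square-function remark but not carried out, is to balance $N$ \emph{pointwise}. Writing $S_\tau\varphi(x)=\bigl(\sum_j 2^{2j\tau}|\widetilde\Delta_j\varphi(x)|^2\bigr)^{1/2}$, the dyadic split gives the pointwise inequality
\[
S_\sigma\varphi(x)\le C\bigl(2^{N\sigma}S_0\varphi(x)+2^{N(\sigma-s)}S_s\varphi(x)\bigr)
\]
for every integer $N$. Choosing $N=N(x)$ to equalize the two terms yields $S_\sigma\varphi\le C\,(S_0\varphi)^{1-\sigma/s}(S_s\varphi)^{\sigma/s}$ pointwise, and then H\"older's inequality with the exponent relation $\tfrac1p=(1-\tfrac\sigma s)\tfrac1{p_1}+\tfrac\sigma s\tfrac1{p_2}$ gives $\|S_\sigma\varphi\|_{L^p}\le C\|S_0\varphi\|_{L^{p_1}}^{1-\sigma/s}\|S_s\varphi\|_{L^{p_2}}^{\sigma/s}$. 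The Littlewood--Paley equivalences (valid because $1<p,p_1,p_2<\infty$) then finish the proof, with no case distinction on the ordering of $p_1,p_2$ needed. Alternatively, the one-line route is complex interpolation: the multiplicative bound $\|\varphi\|_{[L^{p_1},\dot H^{s,p_2}]_\theta}\le\|\varphi\|_{L^{p_1}}^{1-\theta}\|(-\Delta)^{s/2}\varphi\|_{L^{p_2}}^{\theta}$ together with $[L^{p_1},\dot H^{s,p_2}]_\theta\hookrightarrow\dot H^{\sigma,p}$ gives the lemma directly.
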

To care the Riesz transforms, we call the following H\"ormander-Mikhlin type estimate.
\begin{lemma}[H\"ormander-Mikhlin inequality\,\cite{Hrmndr,Mkhln,S-S}]\label{lem-HM}
Let $N \in \mathbb{Z}_+,~ 0 < \mu \le 1$ and $\lambda = N + \mu - 2$.
Assume that $\varphi \in C^\infty (\mathbb{R}^2\backslash \{ 0 \})$ satisfies the following conditions:
\begin{itemize}
\item
	$\nabla^\gamma \varphi \in L^1 (\mathbb{R}^2)$ for any $\gamma \in \mathbb{Z}_+^2$ with $|\gamma| \le N$;
\item
	$|\nabla^\gamma \varphi (\xi)| \le C_\gamma |\xi|^{\lambda - |\gamma|}$ for $\xi \neq 0$ and $\gamma \in \mathbb{Z}_+^2$ with $|\gamma| \le N+2$.
\end{itemize}
Then
\[
	\sup_{x \neq 0} \bigl( |x|^{2+\lambda} \bigl| \mathcal{F}^{-1} [\varphi] (x) \bigr| \bigr)
	< + \infty
\]
holds.
\end{lemma}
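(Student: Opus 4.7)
The plan is to prove the decay estimate $|\mathcal{F}^{-1}[\varphi](x)| \le C|x|^{-(2+\lambda)}$ for $|x| \ge 1$; the bound on $|x| \le 1$ is immediate since the $\gamma=0$ case of the first hypothesis gives $\varphi \in L^1(\mathbb{R}^2)$, whence $\mathcal{F}^{-1}[\varphi] \in L^\infty(\mathbb{R}^2)$, and $|x|^{2+\lambda} = |x|^{N+\mu}$ is bounded on that region. For $|x|\ge 1$ I would introduce the standard dyadic splitting at scale $R = 1/|x|$: fix a smooth radial cutoff $\chi \in C_0^\infty(\mathbb{R}^2)$ with $\chi=1$ on $|\xi|\le 1$ and $\mathrm{supp}\,\chi \subset \{|\xi|\le 2\}$, set $\chi_R(\xi) := \chi(\xi/R)$, and decompose $\varphi = \varphi\chi_R + \varphi(1-\chi_R)$. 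The low-frequency piece is trivially estimated using the $\gamma=0$ pointwise bound (and $\lambda > -2$):
\[
    |\mathcal{F}^{-1}[\varphi\chi_R](x)| \le C\int_{|\xi|\le 2R}|\xi|^\lambda d\xi \le C R^{2+\lambda} = C|x|^{-(2+\lambda)}.
\]

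For the high-frequency piece I would select $j \in \{1,2\}$ with $|x_j| \ge |x|/\sqrt{2}$ and integrate by parts $N$ times in $\xi_j$, converting factors of $x_j$ into symbol derivatives:
\[
    (ix_j)^N \mathcal{F}^{-1}[\varphi(1-\chi_R)](x) = (-1)^N\mathcal{F}^{-1}\bigl[\partial_{\xi_j}^N\bigl(\varphi(1-\chi_R)\bigr)\bigr](x).
\]
By Leibniz the bracket splits as $A + B$, where $A := \partial_j^N\varphi\cdot(1-\chi_R)$ and $B$ collects the terms in which at least one $\partial_j$ falls on $\chi_R$. Every summand of $B$ is supported on the annulus $R \le |\xi| \le 2R$ (area $\sim R^2$) and bounded pointwise by $CR^{\lambda-N} = CR^{\mu-2}$, whence $\|B\|_{L^1} \le CR^{\mu}$ and $|\mathcal{F}^{-1}[B](x)| \le C|x|^{-\mu}$.

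The main obstacle is the term $A$: the pointwise bound $|\partial_j^N\varphi| \le C|\xi|^{\mu-2}$ is not integrable at infinity, so a direct $L^1$ estimate fails. I would address this by integrating by parts twice more in $\xi_j$, which is precisely where the hypothesis on $|\gamma| \le N+2$ is used:
\[
    (ix_j)^2 \mathcal{F}^{-1}[A](x) = -\mathcal{F}^{-1}[\partial_{\xi_j}^2 A](x).
\]
Expanding $\partial_j^2 A$ by Leibniz yields three contributions: the bulk term $\partial_j^{N+2}\varphi\cdot(1-\chi_R)$, integrable thanks to $\mu<2$ because $\int_{|\xi|\ge R}|\xi|^{\mu-4}d\xi = CR^{\mu-2}/(2-\mu)$, and the two annular terms $\partial_j^{N+1}\varphi\cdot\partial_j\chi_R$, $\partial_j^N\varphi\cdot\partial_j^2\chi_R$, each of $L^1$-norm $O(R^{\mu-2})$ by the same annular computation as for $B$. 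Hence $|\mathcal{F}^{-1}[A](x)| \le C|x_j|^{-2}R^{\mu-2} = C|x|^{-\mu}$, and combining with the $B$ bound gives $|x_j|^N|\mathcal{F}^{-1}[\varphi(1-\chi_R)](x)| \le C|x|^{-\mu}$, that is, $|\mathcal{F}^{-1}[\varphi(1-\chi_R)](x)| \le C|x|^{-(N+\mu)} = C|x|^{-(2+\lambda)}$. Adding the low-frequency contribution gives the asserted uniform bound. The essential difficulty is the layered integration by parts — $N$ times to extract the polynomial decay $|x|^{-N}$, and two additional times to gain the remaining $|x|^{-\mu}$ while balancing annular cutoff terms against the non-integrable tail at infinity.
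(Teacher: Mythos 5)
Your proof is correct in substance, but first a point of comparison: the paper contains no proof of this lemma at all --- it is quoted from H\"ormander, Mikhlin and, in this precise form, from Shibata--Shimizu, with the remark ``for the details of this Lemma, see \cite{S-S}''. Your argument is therefore a self-contained reconstruction, and it follows the same standard route as the cited proof: truncation at the scale $R=1/|x|$, the trivial low-frequency bound (valid because $2+\lambda=N+\mu>0$), then $N$ integrations by parts to extract $|x|^{-N}$ (legitimate, since hypothesis (i) puts $\partial_{\xi_j}^{k}\bigl(\varphi(1-\chi_R)\bigr)$ in $L^1(\mathbb{R}^2)$ for all $k\le N$), and two further integrations by parts to trade the non-integrable tail $|\xi|^{\mu-2}$ for the integrable $|\xi|^{\mu-4}$, the annular cutoff terms being $O(R^{\mu-2})$ in $L^1$; all exponents check out, including $\|B\|_{L^1}\le CR^{\mu}$ and the final balance $R^{\mu}=|x|^{-\mu}$ against $|x_j|^{-N}\sim|x|^{-N}$. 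Two small repairs are needed. First, since $(ix_j)^{N}\mathcal{F}^{-1}[\psi]=(-1)^{N}\mathcal{F}^{-1}[\partial_{\xi_j}^{N}\psi]$, the sign for two derivatives is $+$, so your display $(ix_j)^2\mathcal{F}^{-1}[A]=-\mathcal{F}^{-1}[\partial_{\xi_j}^2A]$ carries a stray minus; this is harmless because you only use absolute values. Second, and more seriously at the borderline $\mu=1$: the intermediate function $\partial_{\xi_j}A$ is only $O(|\xi|^{\mu-3})=O(|\xi|^{-2})$ at infinity and hypothesis (i) stops at $|\gamma|=N$, so $\partial_{\xi_j}A$ need not belong to $L^1(\mathbb{R}^2)$, and you cannot justify the two extra integrations by parts one step at a time (in a sharp truncation at $|\xi|=\rho$ the boundary term is $O(\rho^{\mu-1})$, which does not vanish when $\mu=1$). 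The fix costs one sentence and uses exactly the facts you already verified: $A\in L^1$ and $\partial_{\xi_j}^2A\in L^1$, the identity $\mathcal{F}^{-1}[\partial_{\xi_j}^2A]=-x_j^2\,\mathcal{F}^{-1}[A]$ holds in $\mathcal{S}'(\mathbb{R}^2)$, and both sides are continuous functions (Fourier transforms of $L^1$ functions, up to the polynomial factor), hence they agree pointwise. With these two adjustments your argument is complete and covers the full range $0<\mu\le1$, so it can stand in place of the external citation.
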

For the details of this Lemma, see \cite{S-S}.
The following proposition is confirmed in \cite{Crdb-Crdb,J04,Y-S18aX}.
\begin{proposition}\label{tr}
Let $\sigma > 2,~ \theta_0 \in H^\sigma (\mathbb{R}^2)$ and $\| \theta_0 \|_{H^\sigma (\mathbb{R}^2)}$ be small.
Assume that the solution $\theta$ satisfies \eqref{exist} and \eqref{mass}.
Then \eqref{drbdd} holds.
\end{proposition}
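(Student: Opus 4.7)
The proof follows the standard fractional energy method, combined with a bootstrap in $H^\sigma$ and an interpolation/ODE argument for the decay rate.

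First I would apply $(-\Delta)^{\sigma/2}$ to \eqref{qg} and take the $L^2$ inner product with $(-\Delta)^{\sigma/2}\theta$. Lemma \ref{lemC-C} (Stroock--Varopoulos with $q=2$) produces the dissipation $\|(-\Delta)^{(\sigma+\alpha/2)/2}\theta\|_{L^2}^2$. Since $\nabla^\bot\psi=(-R_2\theta,R_1\theta)$ is divergence-free, $\nabla\cdot(\theta\nabla^\bot\psi)=\nabla^\bot\psi\cdot\nabla\theta$, and the splitting
\[
(-\Delta)^{\sigma/2}\bigl(\nabla^\bot\psi\cdot\nabla\theta\bigr)=\bigl[(-\Delta)^{\sigma/2},\,\nabla^\bot\psi\cdot\nabla\bigr]\theta+\nabla^\bot\psi\cdot\nabla(-\Delta)^{\sigma/2}\theta
\]
kills the transport piece against $(-\Delta)^{\sigma/2}\theta$ after integration by parts. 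The remaining commutator is handled by a Kato--Ponce type inequality, with the Riesz transforms in $\nabla^\bot\psi$ bounded via Lemma \ref{lem-HM} and the resulting Sobolev terms interpolated through Lemma \ref{GN}, yielding a bound of the form $C\|\theta\|_{H^\sigma}\|(-\Delta)^{(\sigma+\alpha/2)/2}\theta\|_{L^2}^2$. A continuation argument driven by the smallness of $\|\theta_0\|_{H^\sigma}$ then gives $\|\theta(t)\|_{H^\sigma}\le C\|\theta_0\|_{H^\sigma}$ for every $t\ge0$, so the commutator is absorbed into half of the dissipation and one arrives at
\[
\frac{d}{dt}\bigl\|(-\Delta)^{\sigma/2}\theta(t)\bigr\|_{L^2}^2+\bigl\|(-\Delta)^{(\sigma+\alpha/2)/2}\theta(t)\bigr\|_{L^2}^2\le0.
\]

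To extract the decay rate, the $L^1$ bound on $\theta(t)$ (propagated by the equation) together with \eqref{mass} gives, by H\"older, $\|\theta(t)\|_{L^2}\le C(1+t)^{-1/\alpha}$, while a Plancherel/H\"older interpolation in Fourier produces
\[
\bigl\|(-\Delta)^{(\sigma+\alpha/2)/2}\theta\bigr\|_{L^2}^2\ge c\,\|\theta\|_{L^2}^{-\alpha/\sigma}\bigl\|(-\Delta)^{\sigma/2}\theta\bigr\|_{L^2}^{2+\alpha/\sigma}.
\]
Writing $y(t)=\|(-\Delta)^{\sigma/2}\theta(t)\|_{L^2}^2$, these combine into $y'+c(1+t)^{1/\sigma}y^{1+\alpha/(2\sigma)}\le0$, and the substitution $u=y^{-\alpha/(2\sigma)}$ linearises this to $u'\ge c'(1+t)^{1/\sigma}$, whence $u(t)\ge c''(1+t)^{(\sigma+1)/\sigma}$ and therefore $y(t)\le C(1+t)^{-2(\sigma+1)/\alpha}$, which is exactly \eqref{drbdd}.

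The principal difficulty lies in the commutator estimate in the supercritical range $0<\alpha<1$: the dissipation provides only $\alpha/2<1/2$ derivatives of smoothing while a naive bound on $\nabla^\bot\psi\cdot\nabla\theta$ would cost a full derivative, and without the divergence-free cancellation above the scheme has no chance to close. Even with the cancellation in place, matching the scaling of the Kato--Ponce remainder to the dissipation requires a careful choice of Sobolev exponents and judicious use of Lemma \ref{GN}, with any residual slack being absorbed by the smallness of $\|\theta(t)\|_{H^\sigma}$ propagated from the hypothesis on $\|\theta_0\|_{H^\sigma}$.
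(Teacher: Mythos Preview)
The paper does not actually prove Proposition~\ref{tr}; it simply records it with the sentence ``The following proposition is confirmed in \cite{Crdb-Crdb,J04,Y-S18aX}'' and moves on. So there is no in-paper argument to compare against. Your sketch is the standard energy/commutator/ODE scheme that those references carry out: divergence-free cancellation to kill the pure transport term, a Kato--Ponce bound on the remaining commutator absorbed via the smallness of $\|\theta\|_{H^\sigma}$, and then the interpolation inequality
\[
\bigl\|(-\Delta)^{\sigma/2}\theta\bigr\|_{L^2}\le\|\theta\|_{L^2}^{\frac{\alpha/2}{\sigma+\alpha/2}}\bigl\|(-\Delta)^{(\sigma+\alpha/2)/2}\theta\bigr\|_{L^2}^{\frac{\sigma}{\sigma+\alpha/2}}
\]
inverted and fed into the differential inequality to produce the rate $(1+t)^{-(1+\sigma)/\alpha}$. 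The arithmetic in your ODE step is correct and yields exactly \eqref{drbdd}.

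One small mis-citation: Lemma~\ref{lem-HM} in this paper is a pointwise spatial-decay statement for $\mathcal{F}^{-1}[\varphi]$, not an $L^p$ multiplier theorem, so it is not the tool for bounding the Riesz transforms in $\nabla^\bot\psi$; that is ordinary Calder\'on--Zygmund theory (or Lemma~\ref{HLS} at the level of $(-\Delta)^{-1/2}$). This does not affect the argument, only the attribution.
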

The authors proved the following proposition in \cite{Y-S18aX}.
\begin{proposition}\label{prop-wt}
Let $q > 2/\alpha$ and $|x|^2 \theta_0 \in L^q (\mathbb{R}^2)$.
Assume that the solution $\theta$ of \eqref{qg} satiefies \eqref{exist} and \eqref{mass}.
Then \eqref{wttheta} holds.
\end{proposition}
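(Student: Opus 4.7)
The plan is to combine the Duhamel representation \eqref{MS} with the elementary pointwise inequality $|x|^2 \le 2|x-y|^2 + 2|y|^2$, which, inserted inside a convolution, yields the weighted Young inequality
\[
\bigl\| |x|^2 (f * g) \bigr\|_{L^q(\mathbb{R}^2)}
\le
2 \bigl\| |x|^2 f \bigr\|_{L^p} \| g \|_{L^r}
+ 2 \| f \|_{L^p} \bigl\| |x|^2 g \bigr\|_{L^r},
\qquad 1 + \tfrac{1}{q} = \tfrac{1}{p} + \tfrac{1}{r}.
\]
This is the workhorse of the argument. For the linear piece $G_\alpha(t) * \theta_0$, the scaling \eqref{scG} combined with the pointwise decay \eqref{decayG} gives $\bigl\| |x|^2 G_\alpha(t) \bigr\|_{L^q} = t^{2/(\alpha q)} \bigl\| |x|^2 G_\alpha(1) \bigr\|_{L^q}$, and the constant on the right is finite precisely because the hypothesis $q > 2/\alpha$ forces $|x|^2 G_\alpha(1,x) \sim |x|^{-\alpha}$ to lie in $L^q$ at infinity. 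Coupled with $\| G_\alpha(t) \|_{L^1} = 1$, this immediately delivers the target bound for the linear contribution.

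For the Duhamel integral $\int_0^t \nabla G_\alpha(t-s) * (\theta \nabla^\bot \psi)(s)\,ds$ I would apply the same split to each convolution. One resulting term pairs $\bigl\| |x|^2 \nabla G_\alpha(t-s) \bigr\|_{L^p}$, which scales like $(t-s)^{-1/\alpha + 2/(\alpha p)}$ by \eqref{scG} and \eqref{decayG}, against $\bigl\| \theta \nabla^\bot \psi \bigr\|_{L^r}$; the latter is controlled by combining the $L^\infty$ decay from \eqref{mass}, the interpolation bound on $\|\theta(s)\|_{L^r}$ coming from mass conservation, and the $L^r$ boundedness of the Riesz transforms for $1 < r < \infty$. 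The symmetric term pairs $\| \nabla G_\alpha(t-s) \|_{L^p}$ with $\bigl\| |x|^2 \theta \nabla^\bot \psi \bigr\|_{L^r}$, which via H\"older and the Riesz bounds reintroduces the quantity $\bigl\| |x|^2 \theta(s) \bigr\|_{L^q}$ one is trying to estimate. Summing both contributions produces a Gronwall-type integral inequality for $A(t) = \sup_{0 \le s \le t} (1+s)^{-2/(\alpha q)} \bigl\| |x|^2 \theta(s) \bigr\|_{L^q}$.

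The main obstacle is precisely the one flagged in the introduction: when $\alpha \le 1$ the kernel $\nabla G_\alpha(t-s)$ is \emph{not} in $L^1$ uniformly in $t-s$, so the naive choice $(p,r) = (1,q)$ is inadmissible. Instead one must choose $p$ strictly inside the window $\bigl( 2/(\alpha+1),\,2/(1-\alpha) \bigr)$—large enough that $\bigl\| |x|^2 \nabla G_\alpha(1) \bigr\|_{L^p}$ is finite (requiring the tail $|x|^{-\alpha-1}$ to be $L^p$), yet small enough that the time singularity $(t-s)^{-1/\alpha + 2/(\alpha p)}$ near $s = t$ remains integrable—with $r$ determined by $1+1/q = 1/p + 1/r$. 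A careful bookkeeping of the exponents then shows that the nonlinear time integral grows at most like $(1+t)^{2/(\alpha q)}$, and the smallness of $\theta_0$ implicit in the hypothesis of global existence allows the Gronwall-type inequality for $A(t)$ to be closed. Threading this needle is the technical crux of the argument, and it is here that the assumption $q > 2/\alpha$ is invoked a second time, ensuring that the admissible window for $p$ is nonempty and compatible with the H\"older relation for $r$.
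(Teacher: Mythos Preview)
The paper does not actually prove this proposition: it is quoted from \cite{Y-S18aX}. Judging from the surrounding context---the introduction's remark that ``the estimate \eqref{L2} is derived by the energy method'' and the parallel argument for $\||x|^2\nabla\theta\|_{L^p}$ carried out in the proof of Proposition~\ref{prop-v2}---the intended proof multiplies the equation by $|x|^{2q}|\theta|^{q-2}\theta$, integrates, applies the Stroock--Varopoulos inequality (Lemma~\ref{lemC-C}) to extract a dissipative term, handles the commutator $[(-\Delta)^{\alpha/2},|x|^2]$ via Fourier calculus, and integrates the nonlinear term by parts against the divergence-free velocity $\nabla^\bot\psi$ so that no factor of $\nabla\theta$ survives.

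Your Duhamel approach contains a genuine gap. The weighted Young split produces two terms. For the one in which $|x|^2$ lands on the kernel, your window $2/(1+\alpha)<p<2/(1-\alpha)$ is correct: it makes $|x|^2\nabla G_\alpha(1)\in L^p$ and renders the singularity $(t-s)^{-1/\alpha+2/(\alpha p)}$ integrable. But for the symmetric term, where $|x|^2$ lands on $\theta\nabla^\bot\psi$, the kernel factor is the \emph{unweighted}
\[
\|\nabla G_\alpha(t-s)\|_{L^p}=(t-s)^{-\frac1\alpha-\frac2\alpha(1-\frac1p)}\|\nabla G_\alpha(1)\|_{L^p}.
\]
For every $p\ge1$ and $0<\alpha\le1$ the exponent is $\le -1/\alpha\le -1$, so the time integral diverges at $s=t$; enlarging $p$ only makes this worse, and no smallness of $\theta_0$ can close a Gronwall inequality whose kernel is non-integrable. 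This is precisely the obstruction the introduction flags. Within the Duhamel framework the only escape is to move the divergence onto the data, $\nabla G_\alpha*(\theta\nabla^\bot\psi)=G_\alpha*(\nabla\theta\cdot\nabla^\bot\psi)$, but that requires control of $\nabla\theta$, which is not among the hypotheses of the proposition (only \eqref{exist} and \eqref{mass} are assumed). The energy method avoids the issue entirely because the integration by parts in the nonlinear term uses $\nabla\cdot\nabla^\bot\psi=0$ and shifts the derivative onto the weight rather than onto $\theta$.
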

The term of initial-data on \eqref{MS} satisfies the following lemma.
\begin{lemma}\label{lem1}
Let $(1+|x|) \theta_0 \in L^1 (\mathbb{R}^2)$ and $|x|^{3+\alpha} \theta_0 \in L^\infty (\mathbb{R}^2)$.
Then
\[
	\bigl\| |x|^{3+\alpha} \bigl( G_\alpha (t) * \theta_0 - M G_\alpha (t) \bigr) \bigr\|_{L^\infty (\mathbb{R}^2)}
	\le C \bigl( t \bigl\| |x| \theta_0 \bigr\|_{L^1 (\mathbb{R}^2)} + \bigl\| |x|^{3+\alpha} \theta_0 \bigr\|_{L^\infty (\mathbb{R}^2)} \bigr)
\]
for $t > 0$, where $M = \int_{\mathbb{R}^2} \theta_0 (x) dx$.
\end{lemma}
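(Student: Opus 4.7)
The plan is to start from the identity
\begin{equation*}
G_\alpha(t)*\theta_0(x) - MG_\alpha(t,x) = \int_{\mathbb{R}^2}\bigl(G_\alpha(t,x-y) - G_\alpha(t,x)\bigr)\theta_0(y)\,dy,
\end{equation*}
which follows directly from $M = \int_{\mathbb{R}^2}\theta_0(y)\,dy$. Multiplying by $|x|^{3+\alpha}$, I would split the $y$-integration at $|y|=|x|/2$ into the inner region $D_1=\{|y|\le|x|/2\}$ (where the difference is small by smoothness of $G_\alpha$) and the outer region $D_2=\{|y|>|x|/2\}$ (where $|y|$ dominates $|x|$, so the weight can be absorbed into $\theta_0$).

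On $D_1$, apply the mean value theorem to write $G_\alpha(t,x-y)-G_\alpha(t,x) = -y\cdot\int_0^1\nabla G_\alpha(t,x-sy)\,ds$. Because $|x-sy|\ge|x|/2$ on $D_1$, we have $|x|^{3+\alpha}\le 2^{3+\alpha}|x-sy|^{3+\alpha}$. From the scaling \eqref{scG} together with \eqref{decayG}, a change of variable $y=t^{-1/\alpha}x$ yields the pointwise bound $\bigl\||x|^{3+\alpha}\nabla G_\alpha(t)\bigr\|_{L^\infty(\mathbb{R}^2)}\le Ct$ (the factor $t$ comes out of the $(3+\alpha)/\alpha - 3/\alpha$ scaling exponent). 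Thus the $D_1$ contribution is controlled by $Ct\bigl\||y|\theta_0\bigr\|_{L^1(\mathbb{R}^2)}$.

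On $D_2$, use the triangle inequality to separate the two terms. For $|x|^{3+\alpha}|G_\alpha(t,x-y)||\theta_0(y)|$, exploit $|x|^{3+\alpha}\le 2^{3+\alpha}|y|^{3+\alpha}$ and pull $|y|^{3+\alpha}|\theta_0(y)|\le\bigl\||z|^{3+\alpha}\theta_0\bigr\|_{L^\infty}$ out of the integral; what remains, $\int_{\mathbb{R}^2}|G_\alpha(t,x-y)|\,dy = \|G_\alpha(1)\|_{L^1}$, is a harmless constant. For $|x|^{3+\alpha}|G_\alpha(t,x)||\theta_0(y)|$, the same scaling argument as above gives $\bigl\||x|^{2+\alpha}G_\alpha(t)\bigr\|_{L^\infty}\le Ct$, so $|x|^{3+\alpha}|G_\alpha(t,x)|\le Ct|x|\le 2Ct|y|$ on $D_2$, producing another $Ct\||y|\theta_0\|_{L^1}$. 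Summing the three contributions gives the stated estimate.

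There is no real obstacle: the argument is the standard "subtract the integral of a test function, split at $|y|\sim|x|$" technique. The only point requiring care is to track the scaling exponents precisely so that $t$ appears to the first power (not a higher power) on the $L^1$ norm and does not appear at all on the $L^\infty$ norm, which is what makes the estimate sharp in time and matches the scaling $\||x|^{3+\alpha}\nabla G_\alpha(t)\|_{L^\infty}=t\||x|^{3+\alpha}\nabla G_\alpha(1)\|_{L^\infty}$ highlighted in the remark following Theorem \ref{thm}.
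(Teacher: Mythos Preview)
Your proof is correct and follows essentially the same approach as the paper: the identical split at $|y|=|x|/2$, the mean value theorem on the inner region together with $|x|\le 2|x-\lambda y|$ and the scaling $\||x|^{3+\alpha}\nabla G_\alpha(t)\|_{L^\infty}\le Ct$, and on the outer region the same two estimates via $\|G_\alpha(t)\|_{L^1}\||x|^{3+\alpha}\theta_0\|_{L^\infty}$ and $\||x|^{2+\alpha}G_\alpha(t)\|_{L^\infty}\||x|\theta_0\|_{L^1}$. The only cosmetic differences are the order of presentation and your reuse of the letter $y$ for the scaling substitution variable, which you may want to rename to avoid confusion with the integration variable.
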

\begin{proof}
The mean value theorem gives that
\begin{equation}\label{meanG}
\begin{split}
	G_\alpha (t) * \theta_0 - M G_\alpha (t)
	&=
	\int_{|y| > |x|/2} \bigl( G_\alpha (t,x-y) - G_\alpha (t,x) \bigr) \theta_0 (y) dy\\
	&+
	\int_{|y| \le |x|/2} \int_0^1 (-y\cdot\nabla) G_\alpha (t,x-\lambda y) \theta_0 (y) d\lambda dy.
\end{split}
\end{equation}
The first term fulfills that
\[
\begin{split}
	&\biggl| |x|^{3+\alpha} \int_{|y| > |x|/2} \bigl( G_\alpha (t,x-y) - G_\alpha (t,x) \bigr) \theta_0 (y) dy \biggr|\\
	&\le
	C \bigl\| G_\alpha (t) \bigr\|_{L^1 (\mathbb{R}^2)} \bigl\| |x|^{3+\alpha} \theta_0 \bigr\|_{L^\infty (\mathbb{R}^2)}
	+
	C \bigl\| |x|^{2+\alpha} G_\alpha (t) \bigr\|_{L^\infty (\mathbb{R}^2)} \bigl\| |x| \theta_0 \bigr\|_{L^1 (\mathbb{R}^2)}\\
	&\le
	 C \bigl( t \bigl\| |x| \theta_0 \bigr\|_{L^1 (\mathbb{R}^2)} + \bigl\| |x|^{3+\alpha} \theta_0 \bigr\|_{L^\infty (\mathbb{R}^2)} \bigr).
\end{split}
\]
From \eqref{scG} and \eqref{decayG}, for $|y| \le |x|/2$ and $0 < \lambda < 1$, we see that $|x| \le 2|x-\lambda y|$ and then
\[
\begin{split}
	|x|^{3+\alpha} \bigl| \nabla G_\alpha (t,x-\lambda y) \bigr|
	&=
	t^{-3/\alpha} |x|^{3+\alpha} \bigl| \nabla G_\alpha (1,t^{-1/\alpha}(x-\lambda y)) \bigr|
	\le
	C t.
\end{split}
\]
Thus, we have that
\[
	\biggl| |x|^{3+\alpha} \int_{|y| \le |x|/2} \int_0^1 (-y\cdot\nabla) G_\alpha (t,x-\lambda y) \theta_0 (y) d\lambda dy \biggr|
	\le
	C t \bigl\| |x| \theta_0 \bigr\|_{L^1 (\mathbb{R}^2)}
\]
and conclude the proof.
\end{proof}
\begin{lemma}\label{lem2}
Let $\theta_0 \in L^1 (\mathbb{R}^2)$ and $|x|^{2+\alpha} \varrho\, \theta_0 \in L^\infty (\mathbb{R}^2)$, where $\varrho = \varrho (x)$ is defined in Corollary \ref{cor}.
Then, for any $t > 0$,
\[
	|x|^{2+\alpha} \bigl( G_\alpha (t) * \theta_0 - M G_\alpha (t) \bigr)
	\to 0
\]
as $|x| \to +\infty$, where $M = \int_{\mathbb{R}^2} \theta_0 (x) dx$.
\end{lemma}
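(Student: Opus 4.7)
The plan is to mirror the proof of Lemma \ref{lem1}, but exploit the extra weight $\varrho$ together with the absolute continuity of the $L^1$-norm of $\theta_0$ in order to upgrade boundedness to vanishing. Observe that the multiplier here is $|x|^{2+\alpha}$, one power less than in Lemma \ref{lem1}, so the crude $L^1\otimes L^\infty$ estimates used there produce only a bounded quantity; the role of $\varrho$ is precisely to supply the extra decay needed to send this quantity to zero.

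First I would use the mean value theorem decomposition (\ref{meanG}) to write
\[
G_\alpha(t)*\theta_0 - M G_\alpha(t) = I_1(x) + I_2(x),
\]
where $I_1$ is the integral over $\{|y|>|x|/2\}$ with integrand $(G_\alpha(t,x-y)-G_\alpha(t,x))\theta_0(y)$, and $I_2$ is the near-field integral over $\{|y|\leq|x|/2\}$ with the $\nabla G_\alpha$ factor. For $|x|^{2+\alpha} I_1$, I separate the two $G_\alpha$-terms. The contribution involving $G_\alpha(t,x)$ factors as $|x|^{2+\alpha}|G_\alpha(t,x)|\cdot \int_{|y|>|x|/2}|\theta_0(y)|\,dy$; the first factor is uniformly bounded in $x$ thanks to \eqref{scG}--\eqref{decayG}, while the second factor is the tail of $\|\theta_0\|_{L^1(\mathbb{R}^2)}$, which tends to $0$ as $|x|\to\infty$ by absolute continuity. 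The contribution involving $G_\alpha(t,x-y)$ is handled by the pointwise bound $|\theta_0(y)|\leq \||y|^{2+\alpha}\varrho\,\theta_0\|_{L^\infty}/(|y|^{2+\alpha}\varrho(y))$; combined with the monotonicity of $\varrho$ in $|y|$, this gives $\sup_{|y|>|x|/2}|\theta_0(y)|\leq C\,|x|^{-2-\alpha}\varrho(|x|/2)^{-1}$, whence $|x|^{2+\alpha}\int_{|y|>|x|/2}|G_\alpha(t,x-y)||\theta_0(y)|\,dy \leq C\varrho(|x|/2)^{-1}\|G_\alpha(t)\|_{L^1(\mathbb{R}^2)}\to 0$.

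For the near-field term $|x|^{2+\alpha} I_2$, the scaling \eqref{scG} and pointwise estimate \eqref{decayG} yield $|\nabla G_\alpha(t,x-\lambda y)|\leq C t\,|x|^{-3-\alpha}$ whenever $|y|\leq|x|/2$ and $0\leq\lambda\leq 1$, because then $|x-\lambda y|\geq|x|/2$. The problem reduces to showing
\[
\frac{Ct}{|x|}\int_{|y|\leq|x|/2}|y||\theta_0(y)|\,dy \longrightarrow 0.
\]
I would split the $y$-integral at a fixed threshold $|y|=K$: the inner part contributes at most $K\|\theta_0\|_{L^1(\mathbb{R}^2)}/|x|\to 0$, while the outer part, using $|\theta_0(y)|\leq C/(|y|^{2+\alpha}\varrho(y))$ and $\varrho(y)\geq\varrho(K)$, is $O(\varrho(K)^{-1}|x|^{-\alpha})$ when $0<\alpha<1$ and $O(\varrho(K)^{-1}\log(|x|/K)/|x|)$ when $\alpha=1$, both of which vanish as $|x|\to\infty$.

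The main obstacle is arranging the estimate of the far-field piece of $I_1$ so that $\varrho$ actually produces decay rather than a bare constant; this requires combining the weighted $L^\infty$ hypothesis with the monotonicity of $\varrho$, evaluated at the radial threshold $|x|/2$. Once this is in place, the remaining estimates are routine and parallel those of Lemma \ref{lem1}.
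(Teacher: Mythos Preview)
Your argument is correct. The treatment of the far-field piece $I_1$ matches the paper's proof essentially verbatim: you split off $G_\alpha(t,x)$ and use the $L^1$-tail of $\theta_0$, and for $G_\alpha(t,x-y)$ you transfer the weight $|x|^{2+\alpha}$ onto $|y|^{2+\alpha}\varrho(y)\theta_0(y)$ via the monotonicity of $\varrho$, gaining the factor $\varrho(x/2)^{-1}$.

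The difference lies in the near-field term $I_2$. You use the sharp pointwise bound $|\nabla G_\alpha(t,x-\lambda y)|\le Ct|x|^{-3-\alpha}$ and then split the remaining integral $\int_{|y|\le |x|/2}|y||\theta_0(y)|\,dy$ at a fixed radius $K$, handling the inner shell by $\|\theta_0\|_{L^1}$ and the outer shell by the weighted $L^\infty$ hypothesis; this forces a small case distinction between $0<\alpha<1$ and $\alpha=1$. The paper instead inserts an $\varepsilon$-weight: writing $|x|^{2+\alpha}\le C|x|^{-\varepsilon}|x-\lambda y|^{2+\alpha+\varepsilon}$ on the region $|y|\le|x|/2$, it applies H\"older to obtain
\[
|x|^{2+\alpha}|I_2(x)|\le C|x|^{-\varepsilon}\,\bigl\||z|^{2+\alpha+\varepsilon}\nabla G_\alpha(t)\bigr\|_{L^{r'}(\mathbb{R}^2)}\,\bigl\|y\theta_0\bigr\|_{L^r(\mathbb{R}^2)}
\]
for $\tfrac{2}{1+\alpha}<r<\tfrac{2}{1+\varepsilon}$, which gives the clean decay $|x|^{-\varepsilon}$ uniformly in $\alpha$ without any splitting. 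Your route is more elementary and avoids verifying $y\theta_0\in L^r$; the paper's route is slicker and avoids the logarithmic case analysis.
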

\begin{proof}
We use \eqref{meanG}.
For the first term, we see that
\[
\begin{split}
	&|x|^{2+\alpha} \biggl| \int_{|y| > |x|/2} \bigl( G_\alpha (t,x-y) - G_\alpha (t,x) \bigr) \theta_0 (y) dy \biggr|\\
	&\le
	C \varrho (\tfrac{x}2)^{-1} \int_{|y| > |x|/2} \bigl| G_\alpha (t,x-y) \bigr| \bigl| |y|^{2+\alpha} \varrho(y) \theta_0 (y) \bigr| dy
	+ C \int_{|y| > |x|/2} \bigl| |x|^{2+\alpha} G_\alpha (t,x) \bigr| \bigl| \theta_0 (y) \bigr| dy\\
	&\le
	C \varrho (\tfrac{x}2)^{-1} \bigl\| |x|^{2+\alpha} \varrho \theta_0 \bigr\|_{L^\infty (\mathbb{R}^2)}
	+
	C \int_{|y| > |x|/2} \bigl| \theta_0 (y) \bigr| dy
	\to 0
\end{split}
\]
as $|x| \to +\infty$.
For the second term on \eqref{meanG}, we choose sufficiently small $\varepsilon > 0$ and $\frac2{1+\alpha} < r < \frac2{1+\varepsilon}$, then we have by the similar argument as in the proof of Lemma \ref{lem1} that
\[
\begin{split}
	&|x|^{2+\alpha} \biggl| \int_{|y| \le |x|/2} \int_0^1 (-y\cdot\nabla) G_\alpha (t,x-\lambda y) \theta_0 (y) d\lambda dy \biggr|\\
	&\le
	C |x|^{-\varepsilon} \int_{|y| \le |x|/2} \int_0^1 |x-\lambda y|^{2+\alpha+\varepsilon} \bigl| \nabla G_\alpha (t,x-\lambda y) \bigr|
	\bigl| y \theta_0 (y) \bigr| d\lambda dy\\
	&\le
	C |x|^{-\varepsilon} t^{-\frac2{\alpha r} + \frac1\alpha + 1 + \frac\varepsilon\alpha} \bigl\| x \theta_0 \bigr\|_{L^r (\mathbb{R}^2)}
	\to 0
\end{split}
\]
as $|x| \to +\infty$.
Here we remark that $-\frac2{\alpha r} + \frac1\alpha + 1 + \frac\varepsilon\alpha > 0$ and
\[
	\bigl\| x \theta_0 \bigr\|_{L^r (\mathbb{R}^2)}
	\le
	 \bigl\| (1+|x|)^{-1-\alpha} \bigr\|_{L^r (\mathbb{R}^2)} \bigl\| |x| (1+|x|)^{1+\alpha} \theta_0 \bigr\|_{L^\infty (\mathbb{R}^2)}
	< + \infty.
\]
Hence we complete the proof.
\end{proof}
\section{Proof of main theorems}
To show our main assertions, we prepare the estimate for the nonlinear effect.
We denote the nonlinear term on \eqref{MS} by $v$, i.e.,
\begin{equation}\label{def-v}
	v (t) = 
	-
	\int_0^t
		\nabla G_\alpha (t-s) * (\theta \nabla^\bot \psi) (s)
	ds.
\end{equation}
Then decay-rate of $v$ as $|x| \to +\infty$ is published as follows.
\begin{proposition}\label{prop-v2}
Let $0 < \alpha \le 1,~ p > 2/\alpha,~ \theta_0 \in L^1 (\mathbb{R}^2) \cap H^3 (\mathbb{R}^2),~ |x|^{2+\alpha} \theta_0 \in L^\infty (\mathbb{R}^2)$ and $|x|^2 \nabla \theta_0 \in L^p (\mathbb{R}^2)$.
Assume that the solution $\theta$ fulfills \eqref{mass} and \eqref{drbdd} for $\sigma = 3$.
Then $v$ defined by \eqref{def-v} satisfies that
$\| |x|^{3+\alpha} v(t) \|_{L^\infty (\mathbb{R}^2)} \le C (1+t)$.
\end{proposition}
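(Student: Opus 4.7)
The plan is to use the Duhamel representation $v(t,x) = -\int_0^t [\nabla G_\alpha(t-s) * F(s)](x)\,ds$ with $F = \theta\nabla^\bot\psi$, and to split the spatial convolution at scale $|x|/2$. The near-field region $\{|y|\le |x|/2\}$ satisfies $|x|\le 2|x-y|$ and permits a direct pointwise bound on $|x|^{3+\alpha}|\nabla G_\alpha(t-s,x-y)|$. In the far-field region $\{|y|>|x|/2\}$, $\nabla G_\alpha$ is not integrable in $s$ near $s=t$ when $\alpha\le 1$, so integration by parts in $y$ is used to transfer the derivative from $\nabla G_\alpha$ onto the nonlinearity, exploiting that $\nabla\cdot(\theta\nabla^\bot\psi)=\nabla\theta\cdot\nabla^\bot\psi$ since $\nabla^\bot\psi$ is divergence free.

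For the near-field contribution $I_1$, combining \eqref{scG}, \eqref{decayG} and the pointwise fact $|z|^{3+\alpha}|\nabla G_\alpha(1,z)|\le C$, one obtains $|x|^{3+\alpha}|\nabla G_\alpha(t-s,x-y)|\le C(t-s)$ on $\{|y|\le|x|/2\}$. Therefore
\[
|x|^{3+\alpha}|I_1(t,x)|\le C\int_0^t (t-s)\|\theta\nabla^\bot\psi(s)\|_{L^1}\,ds\le C\int_0^t (t-s)\|\theta(s)\|_{L^2}^2\,ds,
\]
using $L^2$-boundedness of the Riesz transform. Interpolation $\|\theta\|_{L^2}^2\le\|\theta\|_{L^1}\|\theta\|_{L^\infty}$ combined with \eqref{mass} gives $\|\theta(s)\|_{L^2}^2\le C(1+s)^{-2/\alpha}$, and the time integral is $O(1+t)$ since $2/\alpha\ge 2$.

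For the far-field part, introduce a smooth radial cutoff $\chi_x(y)$ equal to $1$ on $\{|y|\le|x|/3\}$ and vanishing on $\{|y|\ge 2|x|/3\}$ (so $|\nabla\chi_x|\le C/|x|$). Writing $I_2=-\int_0^t\int(1-\chi_x)\nabla G_\alpha(t-s,x-y)\cdot F(s,y)\,dy\,ds$ and integrating by parts in $y$ produces
\[
I_2=-\int_0^t\int(1-\chi_x)G_\alpha(t-s,x-y)\nabla\theta\cdot\nabla^\bot\psi\,dy\,ds+\int_0^t\int G_\alpha(t-s,x-y)\nabla\chi_x\cdot F\,dy\,ds.
\]
On the support of $1-\chi_x$ we use $|x|^{3+\alpha}\le C|y|^{3+\alpha}$; on the support of $\nabla\chi_x$, $|y|\sim|x|$ allows $|x|^{3+\alpha}|\nabla\chi_x|\le C|y|^{2+\alpha}$. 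Hölder's inequality in $y$ then yields
\[
|x|^{3+\alpha}|I_2|\le C\int_0^t\|G_\alpha(t-s)\|_{L^{p'}}\bigl(\||y|^{3+\alpha}\nabla\theta\cdot\nabla^\bot\psi(s)\|_{L^p}+\||y|^{2+\alpha}F(s)\|_{L^p}\bigr)\,ds,
\]
and $\|G_\alpha(t-s)\|_{L^{p'}}=C(t-s)^{-2/(\alpha p)}$ is integrable on $[0,t]$ because $p>2/\alpha$.

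The remaining task is to control the weighted products in $L^p$. Splitting the weights as $|y|^{3+\alpha}=|y|^2\cdot|y|^{1+\alpha}$ and $|y|^{2+\alpha}=|y|^2\cdot|y|^\alpha$ and applying Hölder produces terms of type $\||y|^2\nabla\theta\|_{L^p}\||y|^{1+\alpha}\nabla^\bot\psi\|_{L^\infty}$ together with products involving $\||y|^2\theta\|_{L^q}$ and weighted $L^r$ norms of $\nabla^\bot\psi$. The factor $\||y|^2\nabla\theta\|_{L^p}$ is propagated from the hypothesis $|x|^2\nabla\theta_0\in L^p$ by an energy estimate of the type employed in \cite{Y-S-NA}; the moment $\||y|^2\theta\|_{L^q}$ is controlled by Proposition \ref{prop-wt}; and the weighted uniform Riesz bound $\||y|^{1+\alpha}R\theta\|_{L^\infty}$ follows by applying Lemma \ref{lem-HM} to the symbol $i\xi/|\xi|$ together with the regularity $\theta\in H^3$ and the moment controls. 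Each contribution integrates to $O(1+t)$, completing the proof. The main technical obstacles are the weighted $L^\infty$ bound on $R\theta$ (the Riesz transform being unbounded on $L^\infty$) and the energy-type propagation of $\||x|^2\nabla\theta\|_{L^p}$, both of which require careful commutator estimates with the fractional Laplacian and the Riesz transform.
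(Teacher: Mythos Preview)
Your overall strategy---a near/far splitting of the convolution at scale $|x|/2$, with integration by parts in the far region to trade $\nabla G_\alpha$ for $G_\alpha$---is a legitimate alternative to the paper's route. The paper instead writes $x_j v$ by distributing $x_j=(x_j-y_j)+y_j$ across the convolution and integrating by parts on the $y_j$-piece, which produces the six terms in \eqref{bss}; then each term receives the further split $|x|^{2+\alpha}\le C(|x-y|^{2+\alpha}+|y|^{2+\alpha})$. Both decompositions ultimately reduce the problem to the same two ingredients: the uniform-in-time bound $\||x|^2\nabla\theta(t)\|_{L^p}\le C$ and a weighted $L^\infty$ bound on $\nabla^\bot\psi$. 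Your cutoff scheme is conceptually cleaner (fewer terms), while the paper's algebraic decomposition avoids introducing $\chi_x$.

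There is, however, a concrete gap in your treatment of the Riesz factor. You assert that $\||y|^{1+\alpha}R\theta\|_{L^\infty}$ follows from Lemma~\ref{lem-HM} applied to the symbol $i\xi/|\xi|$; but that symbol is not in $L^1(\mathbb{R}^2)$ and does not satisfy the hypotheses of the lemma. The paper obtains the (stronger) bound $\||x|^2\nabla^\bot\psi\|_{L^\infty}\le C$ uniformly in $t$ by a different mechanism: it writes $R_l\theta$ through the mild formula \eqref{MS}, applies Lemma~\ref{lem-HM} to the \emph{smoothed} kernel $R_l G_\alpha$ (for which the lemma is valid), exploits the scaling identity $\||x|^2 R_l G_\alpha(t)\|_{L^\infty}=\||x|^2 R_l G_\alpha(1)\|_{L^\infty}$, and closes a bootstrap using the smallness of $\|\theta_0\|_{H^2}$. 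Without this argument your far-field bound is unjustified.

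Secondly, the energy propagation $\||x|^2\nabla\theta\|_{L^\infty_t L^p_x}<\infty$, which you relegate to a citation of \cite{Y-S-NA}, is in fact the bulk of the paper's proof (displays \eqref{bsg}--\eqref{tokyu}). It requires the Stroock--Varopoulos inequality (Lemma~\ref{lemC-C}), the explicit commutator identity $[(-\Delta)^{\alpha/2},|x|^2]\partial_k\theta=\alpha^2(-\Delta)^{(\alpha-2)/2}\partial_k\theta-2\alpha\nabla(-\Delta)^{(\alpha-2)/2}\cdot(x\partial_k\theta)$, Hardy--Littlewood--Sobolev and Gagliardo--Nirenberg estimates, \emph{and} the weighted Riesz bound above as an input. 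You correctly flag both of these as the main obstacles in your closing sentence, but the proposal does not actually carry them out; as written it is an outline rather than a proof.
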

\begin{proof}
From the definition, we see for $j = 1,2$ that
\[
\begin{split}
	x_j v &= - \int_0^t (x_j \nabla G_\alpha) (t-s) * (\theta \nabla^\bot\psi) (s) ds\\
	&- \int_0^t G_\alpha (t-s) * (\theta(\nabla^\bot\psi)_j) (s) ds
	- \int_0^t G_\alpha (t-s) * (x_j \nabla\theta\cdot\nabla^\bot\psi) (s) ds,
\end{split}
\]
where $(\nabla^\bot\psi)_j$ is the $j$-th component of $\nabla^\bot\psi$.
Hence
\begin{equation}\label{bss}
\begin{split}
	&\bigl\| |x|^{2+\alpha} x_j v \bigr\|_{L^\infty (\mathbb{R}^2)}
	\le
	C \int_0^t
		\bigl\| |x|^{2+\alpha} x_j \nabla G_\alpha (t-s) \bigr\|_{L^\infty (\mathbb{R}^2)}
		\bigl\| \theta \nabla^\bot \psi \bigr\|_{L^1 (\mathbb{R}^2)}
	ds\\
	&+
	C \int_0^t \int_{\mathbb{R}^2}
		\bigl|  (x_j-y_j) \nabla G_\alpha (t-s,x-y) \bigr|
		\bigl| |y|^{2+\alpha} (\theta \nabla^\bot \psi)(s,y) \bigr|
	dyds\\
	&+
	\int_0^t
		\bigl\| |x|^{2+\alpha} G_\alpha (t-s) \bigr\|_{L^\infty (\mathbb{R}^2)}
		\bigl\| \theta (\nabla^\bot\psi)_j (s) \bigr\|_{L^1 (\mathbb{R}^2)}
	ds\\
	&+
	\int_0^t \int_{\mathbb{R}^2}
		\bigl| G_\alpha (t-s,x-y) \bigr|
		\bigl| |y|^{2+\alpha} (\theta (\nabla^\bot\psi)_j) (s,y) \bigr|
	dyds\\
	&+
	\int_0^t
		\bigl\| |x|^{2+\alpha} G_\alpha (t-s) \bigr\|_{L^\infty (\mathbb{R}^2)}
		\bigl\| x_j (\nabla \theta \cdot \nabla^\bot\psi) (s) \bigr\|_{L^1 (\mathbb{R}^2)}
	ds\\
	&+
	\int_0^t \int_{\mathbb{R}^2}
		\bigl| G_\alpha (t-s,x-y) \bigr|
		\bigl| |y|^{2+\alpha} y_j (\nabla \theta \cdot \nabla^\bot\psi) (s,y) \bigr|
	dyds.
\end{split}
\end{equation}
From \eqref{mass}-\eqref{wttheta}, \eqref{scG} and \eqref{decayG}, we see that the first and the third terms are bounded by $C (1+t)$.
We estimate the second, the fourth and the fifth terms later.
To care the last term, we prepare the estimate for $\| |x|^2 \nabla \theta \|_{L^p (\mathbb{R}^2)}$.
We derivate the first equality on \eqref{qg} in $x_k$ and multiply $|x|^{2p} |\partial_k \theta|^{p-2} \partial_k \theta$.
Then, by integrating it in $(0,t) \times \mathbb{R}^2$ and employing Lemma \ref{lemC-C}, we see for the second and the last terms that
\[
\begin{split}
	&\int_{\mathbb{R}^2}
		|x|^{2p} |\partial_k \theta|^{p-2} \partial_k \theta (-\Delta)^{\alpha/2} \partial_k \theta
	dx\\
	&=
	\int_{\mathbb{R}^2}
		\bigl| |x|^2 \partial_k \theta \bigr|^{p-2} |x|^2 \partial_k \theta (-\Delta)^{\alpha/2} (|x|^2 \partial_k \theta)
	dx
	-
	\int_{\mathbb{R}^2}
		\bigl| |x|^2 \partial_k \theta \bigr|^{p-2} |x|^2 \partial_k \theta \bigl[ (-\Delta)^{\alpha/2}, |x|^2 \bigr] \partial_k \theta
	dx\\
	&\ge
	\frac2p \bigl\| (-\Delta)^{\alpha/4} \bigl( \bigl| |x|^2 \partial_k \theta \bigr|^{p/2} \bigr) \bigr\|_{L^2 (\mathbb{R}^2)}^2
	-
	\int_{\mathbb{R}^2}
		\bigl| |x|^2 \partial_k \theta \bigr|^{p-2} |x|^2 \partial_k \theta \bigl[ (-\Delta)^{\alpha/2}, |x|^2 \bigr] \partial_k \theta
	dx
\end{split}
\]
and
\[
\begin{split}
	&\int_{\mathbb{R}^2}
		|x|^{2p} |\partial_k \theta|^{p-2} \partial_k \theta \partial_k \nabla \cdot (\theta \nabla^\bot\psi)
	dx\\
	&=
	-2p \int_{\mathbb{R}^2}
		\bigl| |x|^2 \partial_k \theta \bigr|^{p-2} |x|^2 \partial_k \theta x_k \nabla\theta \cdot \nabla^\bot \psi
	dx
	-(p-1) \int_{\mathbb{R}^2}
		\bigl| |x|^2 \partial_k \theta \bigr|^{p-2} \partial_k^2 \theta |x|^4 \nabla\theta\cdot\nabla^\bot\psi
	dx.
\end{split}
\]
Here we used the relation $\nabla \cdot (\theta \nabla^\bot\psi) = \nabla \theta \cdot \nabla^\bot \psi$.
Therefore we have that
\begin{equation}\label{bsg}
\begin{split}
	&\frac1p \sum_{k=1}^2 \bigl\| |x|^2 \partial_k \theta \bigr\|_{L^p (\mathbb{R}^2)}^p
	+
	\frac2p \sum_{k=1}^2 \int_0^t \bigl\| (-\Delta)^{\alpha/4} \bigl( \bigl| |x|^2 \partial_k \theta \bigr|^{p/2} \bigr) \bigr\|_{L^2 (\mathbb{R}^2)}^2 ds\\
	&\le
	\frac1p \sum_{k=1}^2 \bigl\| |x|^2 \partial_k \theta_0 \bigr\|_{L^p (\mathbb{R}^2)}^p
	+
	2p \sum_{k=1}^2 \int_0^t \int_{\mathbb{R}^2}
		\bigl| |x|^2 \partial_k \theta \bigr|^{p-2} |x|^2 \partial_k \theta x_k \nabla\theta \cdot \nabla^\bot \psi
	dxds\\
	&+
	(p-1) \sum_{k=1}^2 \int_0^t \int_{\mathbb{R}^2}
		\bigl| |x|^2 \partial_k \theta \bigr|^{p-2} \partial_k^2 \theta |x|^4 \nabla\theta\cdot\nabla^\bot\psi
	dxds\\
	&+
	\sum_{k=1}^2 \int_0^t \int_{\mathbb{R}^2}
		\bigl| |x|^2 \partial_k \theta \bigr|^{p-2} |x|^2 \partial_k \theta \bigl[ (-\Delta)^{\alpha/2}, |x|^2 \bigr] \partial_k \theta
	dxds.
\end{split}
\end{equation}
From \eqref{MS} and Hausdorf-Young's inequality, we see that
\[
\begin{split}
	\bigl\| |x|^2 R_l \theta \bigr\|_{L^\infty (\mathbb{R}^2)}
	&\le
	\bigl\| |x|^2 (R_l G_\alpha (t) * \theta_0) \bigr\|_{L^\infty (\mathbb{R}^2)}
	+
	\biggl\| |x|^2 \int_0^t R_l G_\alpha (t-s) * (\nabla\theta \cdot \nabla^\bot \psi) (s) ds \biggr\|_{L^\infty (\mathbb{R}^2)}\\
	&\le
	C \Bigl( 1 + \bigl\| \theta_0 \bigr\|_{H^2 (\mathbb{R}^2)} \bigl\| |x|^2 \nabla^\bot \psi \bigr\|_{L^\infty ((0,\infty)\times\mathbb{R}^2)} \Bigr).
\end{split}
\]
Here we used the scaling property $\| |x|^2 R_l G_\alpha (t) \|_{L^\infty (\mathbb{R}^2)} = \| |x|^2 R_l G_\alpha (1) \|_{L^\infty (\mathbb{R}^2)}$.
We remark that Lemma \ref{lem-HM} guarantees that $|x|^2 R_l G_\alpha (1) \in L^\infty (\mathbb{R}^2)$.
The relation $\nabla^\bot \psi = (-R_2 \theta, R_1 \theta)$ and the smallness of $\theta_0$ conclude that $\| |x|^2 \nabla^\bot \psi \|_{L^\infty (\mathbb{R}^2)} \le C$.
Hence
%%
%Since $\nabla^\bot \psi = (- R_2 \theta, R_1 \theta)$ and
%\[
%	x_k R_l \varphi
%	=
%	\gamma \int_{\mathbb{R}^2} \Bigl(
%		\frac{(x_k-y_k)(x_l-y_l)}{|x-y|^3} + \frac{y_k (x_l-y_l)}{|x-y|^3} \Bigr) \varphi (y)
%	dy,
%\]
%where $\gamma = \pi^{-3/2} \Gamma (3/2)$, we have from Lemma \ref{HLS}, \eqref{mass} and \eqref{wttheta} that
%\[
%	\bigl\| x_k \nabla^\bot \psi \bigr\|_{L^r (\mathbb{R}^2)}
%	\le
%	C \bigl( \bigl\| (-\Delta)^{-1/2} \theta \bigr\|_{L^r (\mathbb{R}^2)} + \bigl\| x_k \theta \bigr\|_{L^r (\mathbb{R}^2)} \bigr)
%	\le
%	C (1+t)^{-\frac2\alpha (1-\frac1r)+\frac1\alpha}
%\]
%for $r > 2$.
%%
%A coupling of this and \eqref{drbdd} yields
for the second term on \eqref{bsg} and some small $\delta > 0$, we have that
\begin{equation}\label{bsg2}
\begin{split}
	&\sum_{k=1}^2 \biggl| \int_0^t \int_{\mathbb{R}^2}
		\bigl| |x|^2 \partial_k \theta \bigr|^{p-2} |x|^2 \partial_k \theta x_k \nabla\theta \cdot \nabla^\bot \psi
	dxds \biggr|\\
	&\le
	C \sum_{k=1}^2 \int_0^t
		\bigl\| |x|^2 \partial_k \theta \bigr\|_{L^p (\mathbb{R}^2)}^{p-1}
		\bigl\| |x|^2  \nabla^\bot \psi \bigr\|_{L^\infty (\mathbb{R}^2)}^{1/2}
		\bigl\| |\nabla^\bot\psi|^{1/2} \nabla\theta \bigr\|_{L^p (\mathbb{R}^2)}
	ds\\
	&\le
	C \sum_{k=1}^2 \bigl\| |x|^2 \partial_k \theta \bigr\|_{L^\infty (0,t: L^p (\mathbb{R}^2))}^{p-1}
	 \int_0^t
		(1+s)^{-\frac2\alpha (1-\frac1p)-\frac2\alpha}
	ds
	\le
	C_\delta + \delta \sum_{k=1}^2 \bigl\| |x|^2 \partial_k \theta \bigr\|_{L^\infty (0,t: L^p (\mathbb{R}^2))}^p.
\end{split}
\end{equation}
For the third term on \eqref{bsg}, we see for $\frac1r = \frac1p + \frac\alpha{2} - \frac\alpha{2p}$ that
%\begin{equation}\label{wt2Rsz}
%	|x|^2 R_l \varphi
%	=
%	\gamma \int_{\mathbb{R}^2}
%		\biggl( \frac{x_l-y_l}{|x-y|}
%		+ \frac{2 (x - y) \cdot y (x_l - y_l)}{|x-y|^3}
%		+ \frac{|y|^2 (x_l - y_l)}{|x-y|^3} \biggr) \varphi (y)
%	dy.
%\end{equation}
%%
%Hence we have for $1 < \rho < 2,~ 
%\frac1{r_0} = \frac\alpha2+\frac1p-\frac\alpha{2p},~ 
%\frac1{r_1} = \frac\alpha{2} + \frac1p - \frac\alpha{2p} - \frac1\rho + \frac12$ and $\frac1{r_2} = \frac\alpha{2} + \frac1p - \frac1q - \frac\alpha{2p}$ that
\[
\begin{split}
	&\sum_{k=1}^2 \biggl| \int_0^t \int_{\mathbb{R}^2}
		\bigl| |x|^2 \partial_k \theta \bigr|^{p-2} \partial_k^2 \theta |x|^4 \nabla\theta\cdot\nabla^\bot\psi
	dxds \biggr|\\
	&\le
	C \sum_{k,l=1}^2\int_0^t
		\bigl\| (-\Delta)^{\alpha/4} \bigl( \bigl| |x|^2 \partial_k \theta \bigr|^{p/2} \bigr) \bigr\|_{L^2 (\mathbb{R}^2)}^{2/p'}
		\bigl\| \partial_l^2 \theta \bigr\|_{L^r (\mathbb{R}^2)}
	ds.
\end{split}
\]
Here we used the Sobolev inequality $\| \varphi \|_{L^{2_*} (\mathbb{R}^2)} \le C \| (-\Delta)^{\alpha/4} \varphi \|_{L^2 (\mathbb{R}^2)}$ for $\frac1{2_*} = \frac12 - \frac\alpha{4}$.
Lemma \ref{GN} with Proposition \ref{tr} guarantees that $\partial_l^2 \theta \in L^r (\mathbb{R}^2)$ and $\| \partial_l^2 \theta \|_{L^r (\mathbb{R}^2)} \le C (1+t)^{-\frac2\alpha (1-\frac1r) - \frac2\alpha}$.
Hence
\begin{equation}\label{bsg3}
\begin{split}
	&\sum_{k=1}^2 \biggl| \int_0^t \int_{\mathbb{R}^2}
		\bigl| |x|^2 \partial_k \theta \bigr|^{p-2} \partial_k^2 \theta |x|^4 \nabla\theta\cdot\nabla^\bot\psi
	dxds \biggr|\\
	&\le
	C_\delta \int_0^t (1+s)^{-\frac2\alpha (p-1) - \frac{2p}\alpha -1 + p} ds
	+
	\delta \sum_{k=1}^2\int_0^t
		\bigl\| (-\Delta)^{\alpha/4} \bigl( \bigl| |x|^2 \partial_k \theta \bigr|^{p/2} \bigr) \bigr\|_{L^2 (\mathbb{R}^2)}^2
	ds\\
	&\le
	C_\delta
	+
	\delta \sum_{k=1}^2\int_0^t
		\bigl\| (-\Delta)^{\alpha/4} \bigl( \bigl| |x|^2 \partial_k \theta \bigr|^{p/2} \bigr) \bigr\|_{L^2 (\mathbb{R}^2)}^2
	ds.
\end{split}
\end{equation}
Calculus on the Fourier symbol yield that $[ (-\Delta)^{\alpha/2}, |x|^2 ] \partial_k \theta = \alpha^2 (-\Delta)^{\frac{\alpha-2}2} \partial_k \theta - 2\alpha \nabla (-\Delta)^{\frac{\alpha-2}2} \cdot (x \partial_k \theta)$.
Thus, for $\frac1r = \frac1p + \frac{1-\alpha}2$,
\[
\begin{split}
	&\sum_{k=1}^2 \biggl|
		\int_0^t \int_{\mathbb{R}^2}
		\bigl| |x|^2 \partial_k \theta \bigr|^{p-2} |x|^2 \partial_k \theta \bigl[ (-\Delta)^{\alpha/2}, |x|^2 \bigr] \partial_k \theta
	dxds \biggr|\\
	&\le
	C \sum_{k=1}^2 \bigl\| |x|^2 \partial_k \theta \bigr\|_{L^\infty (0,t: L^p (\mathbb{R}^2))}^{p-1}
	\int_0^t
		\Bigl( \bigl\| (-\Delta)^{\frac{\alpha-1}2} (x \partial_k \theta) \bigr\|_{L^p (\mathbb{R}^2)}
		+ \bigl\| (-\Delta)^{\frac{\alpha-2}2} \partial_k \theta \bigr\|_{L^p (\mathbb{R}^2)} \Bigr)
	ds\\
	&\le
	C \sum_{k=1}^2 \bigl\| |x|^2 \partial_k \theta \bigr\|_{L^\infty (0,t: L^p (\mathbb{R}^2))}^{p-1}
	\int_0^t
		\Bigl( \bigl\| x \partial_k \theta \bigr\|_{L^r (\mathbb{R}^2)} + \bigl\| \theta \bigr\|_{L^r (\mathbb{R}^2)} \Bigr)
	ds.
\end{split}
\]
Since $\frac{pr}{2p-r} > 1$, we have from the Sobolev inequality with \eqref{wttheta} that
\[
	\bigl\| x \partial_k \theta \bigr\|_{L^r (\mathbb{R}^2)}
	\le
	C \bigl\| \partial_k \theta \bigr\|_{L^{\frac{pr}{2p-r}} (\mathbb{R}^2)}^{1/2} \bigl\| |x|^2 \partial_k \theta \bigr\|_{L^p (\mathbb{R})}^{1/2}
	\le
	C (1+t)^{-\frac1{2\alpha}-1+\frac1{\alpha p}} \bigl\| |x|^2 \partial_k \theta \bigr\|_{L^p (\mathbb{R})}^{1/2}.
\]
Therefore
\begin{equation}\label{bsg4}
\begin{split}
	&\sum_{k=1}^2 \biggl|
		\int_0^t \int_{\mathbb{R}^2}
		\bigl| |x|^2 \partial_k \theta \bigr|^{p-2} |x|^2 \partial_k \theta \bigl[ (-\Delta)^{\alpha/2}, |x|^2 \bigr] \partial_k \theta
	dxds \biggr|\\
	&\le
	C \sum_{k=1}^2 \bigl\| |x|^2 \partial_k \theta \bigr\|_{L^\infty (0,t: L^p (\mathbb{R}^2))}^{p-\frac12}
	\int_0^t (1+s)^{-\frac1{2\alpha}-1+\frac1{\alpha p}} ds\\
	&+
	C \sum_{k=1}^2 \bigl\| |x|^2 \partial_k \theta \bigr\|_{L^\infty (0,t: L^p (\mathbb{R}^2))}^{p-1}
	\int_0^t (1+s)^{-\frac2\alpha (1-\frac1p)-1+\frac1\alpha} ds
	\le
	C_\delta + \delta  \sum_{k=1}^2 \bigl\| |x|^2 \partial_k \theta \bigr\|_{L^\infty (0,t: L^p (\mathbb{R}^2))}^p.
\end{split}
\end{equation}
Consequently, applying \eqref{bsg2}-\eqref{bsg4} to \eqref{bsg},
\begin{equation}\label{tokyu}
	 \bigl\| |x|^2 \nabla \theta \bigr\|_{L^\infty (0,\infty: L^p (\mathbb{R}^2))}^p
	 +
	 \int_0^\infty \bigl\| (-\Delta)^{\alpha/4} \bigl( \bigl| |x|^2 \nabla \theta \bigr|^{p/2} \bigr) \bigr\|_{L^2 (\mathbb{R}^2)}^2 ds
	 < + \infty.
\end{equation}
We back to the estimate for the last term on \eqref{bss}.
Since $\| |x|^2 \nabla^\bot \psi \|_{L^\infty (\mathbb{R}^2)} \le C$, this term fulfills that
\[
\begin{split}
	&\int_0^t \int_{\mathbb{R}^2}
		\bigl| G_\alpha (t-s,x-y) \bigr|
		\bigl| |y|^{2+\alpha} y_j (\nabla \theta \cdot \nabla^\bot\psi) (s,y) \bigr|
	dyds\\
	&\le
	C\int_0^t
		\bigl\| G_\alpha (t-s) \bigr\|_{L^{p'} (\mathbb{R}^2)}
		\bigl\| |x|^\alpha x_j \nabla \theta \bigr\|_{L^p (\mathbb{R}^2)}
	ds.
\end{split}
\]
Here \eqref{drbdd} and \eqref{tokyu} yield that
\[
	\bigl\| |x|^\alpha x_j \nabla \theta \bigr\|_{L^p (\mathbb{R}^2)}
	\le
	\bigl\| |x|^{\alpha + 1} \nabla \theta \bigr\|_{L^p (\mathbb{R}^2)}
	\le
	\bigl\| |x|^2 \nabla \theta \bigr\|_{L^p (\mathbb{R}^2)}^{\frac{1+\alpha}2}
	\bigl\| \nabla \theta \bigr\|_{L^p (\mathbb{R}^2)}^{\frac{1-\alpha}2}
	\le
	C (1+t)^{- (\frac2\alpha (1-\frac1p)+\frac1\alpha)\frac{1-\alpha}2}.
\]
Concurrently, if we choose $p_1$ sufficiently near from $p$, then for $\frac1r = \frac1{p_1} - \frac1p + \frac12$,
\[
	\bigl\| (-\Delta)^{-1/2} (x\theta) \bigr\|_{L^{\frac{p_1 p}{p-p_1}} (\mathbb{R}^2)}
	\le
	C \bigl\| x\theta \bigr\|_{L^r (\mathbb{R}^2)}
	\le
	C \bigl\| |x|^2 \theta \bigr\|_{L^p (\mathbb{R}^2)}^{1/2} \bigl\| \theta \bigr\|_{L^{\frac{pr}{2p-r}} (\mathbb{R}^2)}^{1/2}
	\le C (1+t)^{\frac2\alpha (\frac1{p_1} - \frac1p)}.
\]
Therefore
\[
\begin{split}
	&\int_0^t \int_{\mathbb{R}^2}
		\bigl| G_\alpha (t-s) \bigr|
		\bigl| |x|^{2+\alpha} x_j \nabla \theta \cdot \nabla^\bot\psi (s) \bigr|
	dyds
	\le
	C \int_0^t
		(t-s)^{-\frac2{\alpha p}}
		(1+s)^{- (\frac2\alpha (1-\frac1p)+\frac1\alpha)\frac{1-\alpha}2}
	ds\\
	&+
	C \int_0^t
		(t-s)^{-\frac2{\alpha p_1}}
		(1+s)^{- (\frac2\alpha (1-\frac1p)+\frac1\alpha)\frac{1-\alpha}2+\frac2\alpha (\frac1{p_1}-\frac1p)}
	ds
	\le
	C (1+t).
\end{split}
\]
Similarly, for the second term on \eqref{bss}, we obtain that
\[
\begin{split}
	&\int_0^t \int_{\mathbb{R}^2}
		\bigl|  (x_j-y_j) \nabla G_\alpha (t-s,x-y) \bigr|
		\bigl| |y|^{2+\alpha} (\theta \nabla^\bot \psi)(s,y) \bigr|
	dyds\\
	&\le
	C \int_0^t
		\bigl\| x_j \nabla G_\alpha (t-s) \bigr\|_{L^{p'} (\mathbb{R}^2)} \bigl\| |x|^\alpha \theta \bigr\|_{L^p (\mathbb{R}^2)} 
	ds\\
	&\le
	C \int_0^t
		(t-s)^{-\frac2{\alpha p}} (1+s)^{-\frac2\alpha (1-\frac1p)+1}
	ds
	\le
	C (1+t).
\end{split}
\]
The fourth term on \eqref{bss} also is bouded by $C (1+t)$.
For the fifth term on \eqref{bss}, we see from \eqref{tokyu} that
\[
\begin{split}
	&\int_0^t
		\bigl\| |x|^{2+\alpha} G_\alpha (t-s) \bigr\|_{L^\infty (\mathbb{R}^2)}
		\bigl\| x_j (\nabla \theta \cdot \nabla^\bot\psi) (s) \bigr\|_{L^1 (\mathbb{R}^2)}
	ds\\
	&\le
	\int_0^t
		\bigl\| |x|^{2+\alpha} G_\alpha (t-s) \bigr\|_{L^\infty (\mathbb{R}^2)}
		\bigl\| x_j^2 \nabla \theta \bigr\|_{L^p (\mathbb{R}^2)}^{1/2}
		\bigl\| \nabla \theta \bigr\|_{L^p (\mathbb{R}^2)}^{1/2}
		\bigl\|  \nabla^\bot\psi \bigr\|_{L^{p'} (\mathbb{R}^2)}
	ds\\
	&\le
	C \int_0^t
		(t-s) (1+s)^{-\frac3{2\alpha}-\frac1{\alpha p}}
	ds
	\le
	C t.
\end{split}
\]
Applying those estimates to \eqref{bss}, we complete the proof.
\end{proof}

Finally, Lemma \ref{lem1} and Proposition \ref{prop-v2} show Theorem \ref{thm}.
Also we conclude Corollary \ref{cor} from \eqref{B-G}, Lemma \ref{lem2} and Proposition \ref{prop-v2}.

\end{document}